\documentclass{amsart}
\usepackage{amssymb,stmaryrd}
\usepackage{hyperref}
\usepackage{xcolor}

\newtheorem{theorem}{Theorem}[section]
\newtheorem{corollary}[theorem]{Corollary}
\newtheorem{proposition}[theorem]{Proposition}
\newtheorem{lemma}[theorem]{Lemma}

\theoremstyle{definition}
\newtheorem{definition}[theorem]{Definition}

\DeclareMathOperator{\Cay}{Cay}
\DeclareMathOperator{\Sym}{Sym}
\DeclareMathOperator{\Alt}{Alt}
\DeclareMathOperator{\Aut}{Aut}
\DeclareMathOperator{\Out}{Out}

\DeclareMathOperator{\PSL}{PSL}
\DeclareMathOperator{\PSU}{PSU}
\DeclareMathOperator{\PGaL}{P\Gamma L}
\DeclareMathOperator{\Z}{Z}

\newcommand{\Cent}{\mathbf{C}}
\newcommand{\N}{\mathbf{N}}

\newcommand{\K}{\mathrm{K}}
\newcommand{\M}{\mathrm{M}}
\newcommand{\C}{\mathrm{C}}
\newcommand{\D}{\mathrm{D}}
\newcommand{\Q}{\mathrm{Q}}

\newcommand{\ZZ}{\mathbb Z}
\newcommand{\FF}{\mathbb F}

\textwidth=17cm
\textheight=23cm
\parindent=16pt
\oddsidemargin=-0.3cm
\evensidemargin=-0.3cm
\topmargin=-0.5cm

\begin{document}
\title[Squarefree GRR/DRR detection]{Detecting Graphical and Digraphical Regular Representations in groups of squarefree order}

\author{Joy Morris}           
\address{Department of Mathematics and Computer Science\\
University of Lethbridge\\
Lethbridge, AB. T1K 3M4}    
\email{joy.morris@uleth.ca}
\thanks{Supported by the Natural Science and Engineering Research Council of Canada (grant RGPIN-2017-04905).}

\author{Gabriel Verret}
\address{Department of Mathematics, University of Auckland, 38 Princes Street, 1010 Auckland, New Zealand}
\email{g.verret@auckland.ac.nz}

\keywords{Cayley graph, Cayley digraph, GRR, DRR, automorphism group, normaliser}

\begin{abstract}
A necessary condition for a Cayley digraph $\Cay(R,S)$ to be a regular representation is that there are no non-trivial group automorphisms of $R$ that fix $S$ setwise. A group is DRR-detecting or GRR-detecting  if this condition is also sufficient for all Cayley digraphs or graphs on the group, respectively. In this paper, we determine precisely which groups of squarefree order are DRR-detecting, and which are GRR-detecting.
\end{abstract}

\maketitle

\section{Introduction and background}
All groups and digraphs in this paper are finite. Given a group $R$ and a subset $S \subseteq R$, the \emph{Cayley digraph} $\Cay(R,S)$ is the digraph with vertex-set $R$, with an arc from $r$ to $sr$ if and only if $s \in S$. If $S=S^{-1}$ then we also say that $\Cay(R,S)$ is a \emph{Cayley graph}. It is straightforward to show that the right-regular representation $\hat{R}$ of $R$ is a subgroup of  the automorphism group of $\Cay(R,S)$. It is also not hard to show that, conversely, if the automorphism group of a digraph admits a regular subgroup isomorphic to $R$, then the digraph is isomorphic to $\Cay(R,S)$ for some $S \subseteq R$. Digraphs such that their (full) automorphism group is regular are of special interest.

\begin{definition}
A Cayley digraph $\Cay(R,S)$ is a \emph{Digraphical Regular Representation} (\emph{DRR}  for short) if $\Aut(\Cay(R,S))= \hat{R}$. If it is also a Cayley graph, then it is a \emph{Graphical Regular Representation} or \emph{GRR}.
\end{definition}

It is usually not easy to determine whether a Cayley digraph is a DRR, mostly because it is not easy to calculate the automorphism group. On the other hand, there is a particular subgroup of the automorphism group that is easier to understand. We first introduce some notation. Given a permutation group $G$ acting on a set $\Omega$ and $S \subseteq \Omega$,  we denote by $G_S$  the subgroup of $G$ that fixes $S$ setwise.  Given two subgroups $X$ and $Y$ of a common overgroup, we denote by $\N_Y(X)$ ($\Cent_Y(X)$) the normaliser (centraliser) of $X$ in $Y$.

\begin{theorem}\cite[Lemma 2.1]{Godsil1}\label{aut-R-S}
Let $R$ be a group,  let $S\subseteq R$ and let $A=\Aut(\Cay(R,S))$. Then $\N_A(\hat{R})=\hat{R}\rtimes \Aut(R)_S$.
\end{theorem}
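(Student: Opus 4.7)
The plan is to prove the two inclusions $\hat{R}\rtimes \Aut(R)_S \subseteq \N_A(\hat{R})$ and $\N_A(\hat{R})\subseteq \hat{R}\rtimes \Aut(R)_S$ separately, after first noting that the semidirect product structure on the right-hand side is the standard one: $\hat{R}\cap \Aut(R)=1$ (since $\hat{R}$ is regular, only the identity of $\hat{R}$ fixes $1\in R$, which every element of $\Aut(R)$ does), and $\Aut(R)$ acts on $\hat{R}$ by $\phi\,\hat{r}\,\phi^{-1}=\widehat{\phi(r)}$.

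For the forward inclusion, I would check that both factors lie in $\N_A(\hat R)$. The inclusion $\hat{R}\subseteq A$ is part of the definition of a Cayley digraph. For $\phi\in\Aut(R)_S$, an arc from $x$ to $sx$ with $s\in S$ is mapped by $\phi$ to the pair $(\phi(x),\phi(s)\phi(x))$; since $\phi(s)\in S$, this is again an arc, so $\phi\in A$. The computation $\phi\,\hat{r}\,\phi^{-1}=\widehat{\phi(r)}$ then shows that $\phi$ normalises $\hat R$, giving $\hat{R}\rtimes\Aut(R)_S\subseteq \N_A(\hat R)$.

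For the reverse inclusion, let $g\in\N_A(\hat R)$ and set $r_0=g(1)$. Then $\phi:=\hat{r_0}^{\,-1}g$ lies in $\N_A(\hat R)$ and fixes $1$. The main observation is that conjugation by $\phi$ induces an automorphism of $\hat R$, and transporting this along the canonical isomorphism $\hat{R}\cong R$ gives an automorphism $\psi\in\Aut(R)$. The crucial identity is
\[
\phi(r)=\phi\bigl(\hat{r}(1)\bigr)=\bigl(\phi\,\hat{r}\,\phi^{-1}\bigr)(\phi(1))=\widehat{\psi(r)}(1)=\psi(r),
\]
so that $\phi$ acts on the vertex set $R$ exactly as the group automorphism $\psi$. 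Since $\phi\in A$ fixes the vertex $1$, it must preserve its out-neighbourhood, which is precisely $S$; hence $\psi\in\Aut(R)_S$, and $g=\hat{r_0}\,\psi\in \hat{R}\rtimes \Aut(R)_S$.

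There is no genuine obstacle here: the result is a clean identification of $\N_A(\hat R)$ as the intersection of $A$ with the holomorph $\N_{\Sym(R)}(\hat R)=\hat{R}\rtimes\Aut(R)$. The only step requiring a moment's thought is the displayed computation above, which says that normalising the regular subgroup $\hat R$ and fixing the identity vertex forces an element of $\Sym(R)$ to act as a group automorphism; once this is in hand, the condition that $S$ is fixed setwise is immediate from the fact that $S$ is the out-neighbourhood of $1$ in $\Cay(R,S)$.
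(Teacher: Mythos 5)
Your proof is correct and complete: the paper itself gives no proof of this statement (it is quoted from Godsil's paper), and your argument is the standard one, identifying $\N_A(\hat R)$ as $A\cap\N_{\Sym(R)}(\hat R)=A\cap(\hat R\rtimes\Aut(R))$ via the holomorph. The key displayed computation showing that an element of $\N_A(\hat R)$ fixing the vertex $1$ acts as a group automorphism is exactly right, and the identification of $S$ as the out-neighbourhood of $1$ (consistent with the paper's convention that there is an arc from $r$ to $sr$) correctly yields $\psi\in\Aut(R)_S$.
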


Generally speaking, given a Cayley digraph $\Cay(R,S)$, calculating $\hat{R}\rtimes \Aut(R)_S$ is relatively easy, especially compared to determining $\Aut(\Cay(R,S))$. We are interested in groups for which knowing $\hat{R}\rtimes \Aut(R)_S$ is enough to decide whether $\Cay(R,S)$ is a DRR.



\begin{definition}
A group $R$ is \emph{DRR-detecting} if, for every subset $S$ of $R$, $\Aut(R)_S=1$ implies that $\Cay(R,S)$ is a DRR. It is \emph{GRR-detecting} if, for every inverse-closed subset $S$ of $R$, $\Aut(R)_S=1$ implies that $\Cay(R,S)$ is a GRR.
\end{definition}

Clearly, every DRR-detecting group is GRR-detecting. If $\Aut(R)_S=1$ but $\Cay(R,S)$ is not a DRR (respectively, not a GRR), then we say that $\Cay(R,S)$ \emph{witnesses that $R$ is not DRR-detecting} (respectively, \emph{not GRR-detecting}). Equivalently by Theorem~\ref{aut-R-S}, $\Cay(R,S)$ witnesses that $R$ is not DRR-detecting if $\hat{R}$ is self-normalising in $\Aut(\Cay(R,S))$ but $\Cay(R,S)$ is not a DRR.

We would like to determine which groups are DRR-detecting or GRR-detecting. Previous work on this topic includes a result by Godsil \cite{Godsil1} that if $p$ is prime, then every $p$-group that admits no homomorphism onto the wreath product $\C_p \wr \C_p$ is DRR-detecting. In particular, every abelian $p$-group is DRR-detecting. In~\cite{MMV-det}, with D.~Morris we showed that this result is sharp in the sense that $\C_p \wr \C_p$ is not GRR-detecting (or DRR-detecting) when $p$ is odd. We also proved that if a DRR-detecting group is nilpotent, then it is a $p$-group. In this paper we determine which groups of squarefree order are GRR-detecting, and which are DRR-detecting:

\begin{theorem}\label{theo:main}
Let $R$ be a group of squarefree order. 
\begin{enumerate}
\item If $|R|$ is prime, then $R$ is DRR-detecting (and therefore is GRR-detecting).
\item If $|R|$ has two prime factors, then:
\begin{enumerate} 
\item $R$ is not GRR-detecting (and therefore not DRR-detecting) if $R \cong \C_q \rtimes \C_r$ and either:
\begin{enumerate}
\item $(q,r)=(31,5)$; or
\item $(q,r)$ is a safe/Sophie Germain prime pair with $q \equiv 3 \pmod{4}$ and $q \ge 11$.
\end{enumerate}
\item $R$ is GRR-detecting but not DRR-detecting if:
\begin{enumerate}
\item $R$ is abelian; or
\item $R \cong \C_7 \rtimes \C_3$.
\end{enumerate}
\item $R$ is DRR-detecting (and therefore GRR-detecting) if $R$ does not fall into any of the above cases.
\end{enumerate}
\item If $|R|$ has at least three prime factors, then $R$ is not DRR-detecting, but is GRR-detecting if one of the following holds:
\begin{enumerate}
\item $R$ is abelian; 
\item $R \cong \D_{30}$; or
\item $R \cong \C_q \times \D_{2r}$ with $r \in \{3,5\}$.
\end{enumerate}
\end{enumerate}
\end{theorem}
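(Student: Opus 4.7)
My plan is a case analysis on $\omega$, the number of prime divisors of $|R|$, exploiting Hölder's result that every group of squarefree order is metacyclic and hence has very constrained subgroup and automorphism structure. Throughout, to prove $R$ is detecting I take $A := \Aut(\Cay(R,S))$ strictly containing $\hat{R}$ and aim to show $\Aut(R)_S \neq 1$; by Theorem~\ref{aut-R-S} it suffices to verify that $A$ normalises $\hat{R}$, after separately disposing of the trivial cases $S = \emptyset$ and $S = R \setminus \{1\}$. To prove $R$ is not detecting I exhibit explicit connection sets as witnesses. When $\omega = 1$ this is immediate: by Burnside's theorem on transitive groups of prime degree, either $A$ is $2$-transitive (whence $S \in \{\emptyset, R \setminus \{1\}\}$ and $\Aut(R)_S = \Aut(R) \neq 1$) or $A \leq \AGL(1,p) = \N_{\Sym(R)}(\hat{R})$, and Theorem~\ref{aut-R-S} finishes the argument.

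For $\omega = 2$ I appeal to the classification of transitive permutation groups of degree $pq$ (Praeger--Wang, Marušič--Scapellato) to enumerate the overgroups of $\hat{R}$ in $\Sym(R)$. In the abelian case $R = \C_{pq}$, failure of DRR-detecting is witnessed by a suitable circulant digraph whose connection set is a union of non-trivial cosets of the subgroup of smaller prime order, while GRR-detecting follows because any non-$2$-transitive overgroup of $\hat{R}$ normalises a characteristic subgroup and hence yields a non-trivial element of $\Aut(R)_S$. For non-abelian $R \cong \C_q \rtimes \C_r$, I expect the exceptional pairs to arise from transitive actions of small classical groups containing $\hat{R}$ as a regular subgroup: safe/Sophie Germain pairs produce such an action from $\PSL(2,q)$, while the isolated $(31,5)$ reflects a similar embedding in $\PSL(3,5)$. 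The additional conditions $q \equiv 3 \pmod{4}$ and $q \geq 11$ are, I expect, precisely what is needed both to arrange an inverse-closed connection set within a single $\Aut(R)$-orbit and to rule out small-degree accidents.

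For $\omega \geq 3$, non-DRR-detecting should follow from explicit constructions exploiting the characteristic-subgroup structure: a twisted-product digraph suffices whenever $R$ admits a non-trivial direct product decomposition into coprime factors, while for the remaining semidirect shapes (such as $R \cong \D_{30}$) a bespoke construction using specific non-trivial automorphisms of $R$ is required. The three GRR-detecting families listed — $R$ abelian, $R \cong \D_{30}$, and $R \cong \C_q \times \D_{2r}$ with $r \in \{3,5\}$ — would then be handled by invoking the known classification of vertex-transitive graphs of squarefree order, reducing the problem to a finite case-check that every overgroup of $\hat{R}$ is either $2$-transitive or normalises $\hat{R}$. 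The main technical obstacle is the non-abelian $\omega = 2$ case: proving that \emph{exactly} the listed $(q,r)$ pairs witness non-GRR-detecting requires both constructing the witnesses from the coset actions of $\PSL(2,q)$ and $\PSL(3,5)$, and conversely ruling out any other ``accidental'' primitive overgroup of $\hat{R}$ for all other $(q,r)$. I expect this ruling-out argument, with its delicate arithmetic constraints, to consume the bulk of the paper's technical work.
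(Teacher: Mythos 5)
Your skeleton (induct on the number of prime divisors, Burnside for prime order, witnesses from coset actions of $\PSL(2,q)$, coprime direct products for the abelian/dihedral-times-cyclic cases) matches the paper's in outline, but several load-bearing steps are missing or wrong. First, a concrete error: the witness for $(q,r)=(31,5)$ does not come from $\PSL(3,5)$ --- the normaliser of a Singer cycle there is $\C_{31}\rtimes\C_3$, so $\C_{31}\rtimes\C_5$ does not embed as you need it to. The correct ambient group is $\PGaL(5,2)\cong\PSL(5,2)$, where $\C_{31}\rtimes\C_5$ is the (maximal, self-normalising) normaliser of a Singer cycle; this is \cite[Lemma 3.3]{PraegerXu}. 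Second, and more seriously, your plan for the positive (detecting) results rests on classifications that either do not apply or do not exist. For the nonabelian $\omega=2$ case you propose to enumerate transitive overgroups of $\hat R$ of degree $qr$; the paper instead takes $G$ with $\hat R<G\leq\Aut(\Cay(R,S))$ and $\hat R$ \emph{maximal} in $G$, quotients by the core, and views the result as a primitive group acting on the cosets of $\hat R$ --- so $\hat R$ becomes a \emph{soluble point stabiliser} and $G_1$ a \emph{regular subgroup}. This dual viewpoint is what makes the Liebeck--Praeger--Saxl classification of regular subgroups of primitive groups applicable, and it is what isolates the exceptional pairs; the affine-type case of that primitive quotient is not discarded but converted, via Lemma~\ref{GenWreathCosets}, into the statement that $\Cay(R,S)$ is a nontrivial generalised wreath product, which for these groups forces $\Aut(R)_S>1$ (Corollary~\ref{pq-wreath}, Proposition~\ref{qDrWreath}). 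Your approach has no mechanism for handling imprimitive overgroups of $\hat R$, which is where most of the work lies.

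Third, for $\omega\geq 3$ there is no ``known classification of vertex-transitive graphs of squarefree order'' to invoke; the GRR-detecting claims for $\C_q\times\D_{2r}$ require the same maximal-overgroup/LPS/generalised-wreath-product machinery (Theorem~\ref{theo:SpecialCases}), together with the ad hoc automorphisms built in Lemma~\ref{dihedral-auts} and Proposition~\ref{qDrWreath}. Likewise, your ``twisted-product digraph'' only covers $R\cong\C_t\times(\C_n\rtimes\C_m)$ with $t>1$; when $t=1$ and $nm$ has at least three prime factors the group is not a coprime direct product, and the non-DRR-detecting (indeed non-GRR-detecting) witnesses must be constructed by hand as unions of a GRR on a characteristic subgroup $H$ with suitable $K$-cosets outside $H$ (Theorem~\ref{main-GRR}); this construction, not the arithmetic of the exceptional pairs, is where a large part of the technical effort sits. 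Finally, note that for abelian $\C_{pq}$ the paper does not build an explicit circulant witness: non-DRR-detection follows formally from the coprime direct product theorem of \cite{MMV-det} plus Babai's theorem that these groups admit DRRs, while GRR-detection is immediate because inversion is a nontrivial automorphism fixing any inverse-closed $S$.
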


(Throughout the paper, $\C_n$ denotes a cyclic group of order $n$ and $\D_{2n}$ a dihedral group of order $2n$.)

In Section~\ref{sec:WreathProducts}, we define generalised wreath products in the context of Cayley digraphs and show that they are never DRRs, which makes them very useful as potential witnesses that a group is not DRR-detecting (or GRR-detecting). We also give a sufficient condition to recognise a Cayley digraph as a generalised wreath product. Starting in  Section~\ref{sec:squarefreegen}, we restrict our attention to groups of squarefree order. We first show that ``most'' of these groups are not GRR-detecting and then deal with the remaining ``exceptional'' groups in Section~\ref{squarefreeExceptional}.

\section{Generalised wreath products}\label{sec:WreathProducts}
Our main approach to construct witnesses is to use generalised wreath products.  It is therefore important for us to understand what a generalised wreath product is in the context of Cayley digraphs.
\begin{definition}
Let $R$ be a group and let $S\subseteq R$. If there exist $K$ and $H$ with $1<K \trianglelefteq H<R$ such that 
\begin{equation}\label{doublecoset}\tag{$\star\star$}
K(S\setminus H)=S \setminus H=(S\setminus H)K
\end{equation}
then $\Cay(R,S)$ is a \emph{nontrivial generalised wreath product (with respect to $K$ and $H$)}. If $H=K$, then $\Cay(R,S)$ is a \emph{nontrivial wreath product (with respect to $K$)}.
\end{definition}
It is not hard to see that, if $K \trianglelefteq R$ or $S=S^{-1}$, then (\ref{doublecoset}) is equivalent to $K(S\setminus H)=S \setminus H$. We will often use this fact throughout the paper.


\begin{lemma}\label{GenWreath}
A Cayley digraph that is a nontrivial generalised wreath product is not a DRR.
\end{lemma}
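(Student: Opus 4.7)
The plan is to exhibit a non-trivial automorphism of $\Cay(R,S)$ which does not lie in $\hat{R}$, thereby showing the digraph is not a DRR. Since $K$ is non-trivial, pick any $k\in K\setminus\{1\}$ and define $\alpha\colon R \to R$ by
\[
\alpha(v) = \begin{cases} vk & \text{if } v\in H,\\ v & \text{if } v\notin H.\end{cases}
\]
Because $k\in K\le H$, right multiplication by $k$ preserves $H$ setwise, so $\alpha$ is a bijection. It is manifestly not in $\hat{R}$: every non-trivial element of $\hat{R}$ is fixed-point-free, whereas $\alpha$ fixes every element of $R\setminus H$ while moving $1\in H$.

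The main task is to verify that $\alpha$ preserves arcs, i.e.\ that $wv^{-1}\in S$ if and only if $\alpha(w)\alpha(v)^{-1}\in S$, which I would split into four cases according to whether $v$ and $w$ lie in $H$. When both lie in $H$, or both lie outside $H$, a direct computation gives $\alpha(w)\alpha(v)^{-1}=wv^{-1}$, so nothing is to check. In each of the two mixed cases, exactly one of $v,w$ is in $H$, so $wv^{-1}\notin H$, and similarly $\alpha(w)\alpha(v)^{-1}\notin H$. A short calculation expresses $\alpha(w)\alpha(v)^{-1}$ as $wv^{-1}$ multiplied on the left or right by a conjugate of $k^{\pm 1}$ by an element of $H$; since $K\trianglelefteq H$, that conjugate lies in $K$. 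The generalised wreath condition~(\ref{doublecoset}) then delivers exactly what is needed: membership of $wv^{-1}$ in $S\setminus H$ is equivalent to membership of $\alpha(w)\alpha(v)^{-1}$ in $S\setminus H$, and hence in $S$.

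The only subtle point is the choice of $\alpha$: the more symmetric candidate of \emph{left} multiplication by $k$ on $H$ would force $\alpha(w)\alpha(v)^{-1}=kwv^{-1}k^{-1}$ in the ``both in $H$'' case, and there is no reason to expect $S\cap H$ to be closed under conjugation by $K$. Using right multiplication on $H$ sidesteps this, making the two ``pure'' cases automatic and leaving only the two ``mixed'' cases, which are precisely what condition~(\ref{doublecoset}) is designed to handle. With the right definition of $\alpha$ in hand, there is no serious obstacle: the remainder of the argument is a routine four-case verification.
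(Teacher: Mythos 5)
Your proof is correct and is essentially identical to the paper's: the same automorphism (right multiplication by $k\in K\setminus\{1\}$ on $H$, identity elsewhere), the same four-case arc check, and the same use of $K\trianglelefteq H$ to turn the mixed cases into an application of condition~(\ref{doublecoset}). Your closing remark about why right (rather than left) multiplication is the correct choice matches the structure of the paper's computation.
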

\begin{proof}
Let $\Gamma=\Cay(R,S)$ be a nontrivial generalised wreath product with respect to $K$ and $H$. By definition, we have $1<K \trianglelefteq H<R$ and $K(S\setminus H)=S \setminus H=(S\setminus H)K$.

Let $k\in K$ and let $\alpha_k\in\Sym(R)$ be the map which right multiplies elements of $H$ by $k$ while fixing all other elements of $R$. We claim that $\alpha_k\in\Aut(\Gamma)$. Let $(y,x)$ be an arc of $\Gamma$, so that $xy^{-1}\in S$. We check that $\alpha_k(x)\alpha_k(y)^{-1}\in S$. If $x,y\notin H$, this is trivial. Similarly, if $x,y\in H$, then $\alpha_k(x)\alpha_k(y)^{-1}=xk(yk)^{-1}=xy^{-1}\in S$. Now, suppose that $x\in H$ and $y\notin H$.  In particular, $xy^{-1}\notin H$. We have 
$$\alpha_k(x)\alpha_k(y)^{-1}=xky^{-1}=k^{x^{-1}}xy^{-1}\in Kxy^{-1}\subseteq K(S\setminus H)=S\setminus H,$$
 as required (we used the fact that $K \trianglelefteq H$ and $x \in H$). Finally, if $x\notin H$ and $y\in H$,  then again $xy^{-1}\notin H$ and 
$$\alpha_k(x)\alpha_k(y)^{-1}=x(yk)^{-1}=xk^{-1}y^{-1}=xy^{-1}(k^{-1})^{y^{-1}}\in xy^{-1}K\subseteq (S\setminus H)K=S\setminus H.$$

Since $1<K$, there is some $k$ such that $\alpha_k\neq 1$ and, since $H<R$, it follows that $\alpha_k\notin\hat{R}$ and $\Gamma$ is not a DRR.
\end{proof}

We have the following immediate corollary.

\begin{corollary}\label{DRRProp1}
Let $R$ be a group, let $S\subseteq R$ and let $K$ and $H$ be such that
\begin{enumerate}
\item  $1< K\unlhd H< R$,
\item $K(S\setminus H)=S\setminus H=(S\setminus H)K$, and
\item $\Aut(R)_S=1$.
\end{enumerate}
Then $\Cay(R,S)$ witnesses that $R$ is not DRR-detecting (and not GRR-detecting if $S=S^{-1}$).
\end{corollary}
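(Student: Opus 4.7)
The plan is to invoke Lemma~\ref{GenWreath} almost directly. Conditions (1) and (2) say precisely that $\Cay(R,S)$ is a nontrivial generalised wreath product with respect to $K$ and $H$, so Lemma~\ref{GenWreath} applies and yields that $\Cay(R,S)$ is not a DRR. The remaining task is to check that condition (3) is exactly what is needed to promote ``not a DRR'' into a \emph{witness} that $R$ fails to be DRR-detecting.

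More concretely, I would first note that the definition of DRR-detecting says: whenever $\Aut(R)_S = 1$, the Cayley digraph $\Cay(R,S)$ must be a DRR. Hypothesis (3) provides the subset $S$ with $\Aut(R)_S = 1$, while Lemma~\ref{GenWreath} provides the failure of the DRR conclusion. So the pair $(R,S)$ witnesses the failure of DRR-detection.

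For the GRR-detecting statement, I would add one line: if in addition $S = S^{-1}$, then $\Cay(R,S)$ is a Cayley graph, and the strict inequality $\hat{R}<\Aut(\Cay(R,S))$ coming from Lemma~\ref{GenWreath} (one obtains the nontrivial element $\alpha_k$ constructed in its proof) also rules out $\Cay(R,S)$ being a GRR. Combined with $\Aut(R)_S=1$, this shows that $R$ is not GRR-detecting.

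There is essentially no obstacle here: the corollary is a repackaging of Lemma~\ref{GenWreath} together with the definitions of DRR-detecting and GRR-detecting, and no additional calculation is required. The only thing to be mildly careful about is to state clearly which hypotheses match which clauses of the definition of ``witnesses that $R$ is not DRR-detecting'', as recorded right after the definition of DRR/GRR-detecting using Theorem~\ref{aut-R-S}.
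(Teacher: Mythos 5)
Your proposal is correct and matches the paper's intent exactly: the paper presents this as an immediate corollary of Lemma~\ref{GenWreath}, obtained by combining ``not a DRR'' with the hypothesis $\Aut(R)_S=1$ and the definition of a witness. No further comment is needed.
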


To apply Corollary~\ref{DRRProp1}, one must show that $\Aut(R)_S=1$. To do this, it will often be  easiest to show first that $\Aut(R)_S$ normalises $H$, so that $\Aut(R)_S=\Aut(R)_{S \cap H} \cap \Aut(R)_{S \setminus H}$. The most obvious situation in which $\Aut(R)_S$  normalises $H$ is if $\Aut(R)$ itself normalises $H$; that is, when $H$ is characteristic in $R$.  Here is another approach that can also be used.

\begin{proposition}\label{char-H-blocks}
Let $R$ be a group, let $S\subseteq R$ and let $1<K <H <R$. If
\begin{enumerate}
\item $K$ characteristic in $R$,
\item $K$ is maximal in $H$,
\item $K(S\setminus H)=S \setminus H$, and \label{hyp3}
\item it is not the case that $K(S\setminus K)=S \setminus K$,  \label{hyp4}
\end{enumerate} 
then $\Aut(R)_S$ normalises $H$.
\end{proposition}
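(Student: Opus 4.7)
My plan is to take an arbitrary $\varphi\in\Aut(R)_S$ and show that $\varphi(H)=H$ by working in the quotient $R/K$. Since $K$ is characteristic in $R$, it is normal in $R$ and $\varphi$ descends to an automorphism $\bar\varphi\in\Aut(R/K)$. Because $K$ is maximal in $H$ and $K\trianglelefteq H$, the correspondence theorem tells us that $H/K$ has no proper non-trivial subgroups, so $H/K$ is cyclic of prime order; this prime structure will do most of the work at the end.

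The main tool will be the coset-intersection count $f\colon R/K\to\{0,1,\ldots,|K|\}$ defined by $f(Kx)=|S\cap Kx|$. Using $\varphi(K)=K$ and $\varphi(S)=S$ one checks immediately that $f\circ\bar\varphi=f$. Hypothesis (3) rephrases as $f(\bar x)\in\{0,|K|\}$ for every $\bar x\notin H/K$, while hypothesis (4) says that $S\setminus K$ is not a union of left cosets of $K$. Since $1\in K$ gives $K(S\setminus K)\supseteq S\setminus K$ automatically, the failure of equality supplies some $x\in S\setminus K$ and $k\in K$ with $kx\notin S$, which in coset language means there is some $\bar x\neq\bar e$ with $0<f(\bar x)<|K|$. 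Hypothesis (3) then forces this $\bar x$ into $H/K\setminus\{\bar e\}$.

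To finish, I will use that $\bar\varphi$ preserves $f$ and fixes $\bar e$, so it preserves the non-empty set $P=\{\bar y\neq\bar e : 0<f(\bar y)<|K|\}$, which by the previous paragraph is contained in $H/K\setminus\{\bar e\}$. Applied to the $\bar x$ produced above, $\bar\varphi(\bar x)\in P\subseteq H/K\setminus\{\bar e\}$, and since $H/K$ has prime order both $\bar x$ and $\bar\varphi(\bar x)$ generate it. Hence $\bar\varphi(H/K)=\langle\bar\varphi(\bar x)\rangle=H/K$, which is exactly $\varphi(H)=H$. The only delicate point in the plan is the translation of hypothesis (4) into a partial coset supported strictly between $K$ and $H$; once such a coset is in hand, the prime order of $H/K$ takes care of the rest essentially for free.
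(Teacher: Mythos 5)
Your proof is correct, and at its core it is the same argument as the paper's: both proofs rest on the observation that the collection of $K$-cosets that meet $S$ without being contained in $S$ is $\Aut(R)_S$-invariant (since $K$ is characteristic and $S$ is preserved), lies inside $H$ by hypothesis~(3), and contains a coset outside $K$ by hypothesis~(4). The packaging differs slightly: the paper forms the subgroup $H'=\langle s\in S: Ks\not\subseteq S\rangle$, deduces $K<KH'\leq H$, and invokes maximality of $K$ in $H$ to get $KH'=H$, so that normalising $K$ and $H'$ forces normalising $H$; you instead pass to $R/K$, encode the same information in the counting function $f(Kx)=|S\cap Kx|$, and use that $H/K$ is cyclic of prime order so that a single invariant non-identity coset already generates it. The two finishes are interchangeable (maximality of $K$ in $H$ is exactly the statement that $H/K$ is prime cyclic, given $K\trianglelefteq H$), and every step of your version checks out, including the translation of hypothesis~(4) into the existence of a partial coset strictly between $K$ and $H$.
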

\begin{proof}
Let $H'=\langle s \in S: Ks \not\subseteq S\rangle$. Since $K$ is characteristic in $R$, $\Aut(R)$ normalises $K$, and therefore so does $\Aut(R)_S$.  It follows that $\Aut(R)_S$ normalises $H'$. By (\ref{hyp3}) we have $H' \le H$, and by (\ref{hyp4}), $H'\nleq K$. It follows that $K< KH'\leq H$. Since $K$ is maximal in $H$, $H=KH'$ hence $\Aut(R)_S$ normalises $H$, as required.
\end{proof}

We end this section with a sufficient condition to recognise a Cayley digraph as a nontrivial generalised wreath product. (For $G\leq \Aut(\Cay(R,S))$, we denote by $G_1$ the stabiliser of the vertex of $\Cay(R,S)$ corresponding to the identity of $R$.)



\begin{lemma}\label{GenWreathCosets}
Let $R$ be a group, let $S\subseteq R$, let $G\leq \Aut(\Cay(R,S))$ and let $K$ and $H$ be such that $1<K \trianglelefteq H<R$ and $H\leq \N_R(G_1)$. If, for every $r\in R\setminus H$, we have $G_1K,G_1K^r\subseteq G_1G_1^r$, then $\Cay(R,S)$ is a nontrivial generalised wreath product with respect to $K$ and $H$. Moreover, if $S=S^{-1}$, then $G_1K \subseteq G_1G_1^r$ is sufficient to reach the same conclusion.
\end{lemma}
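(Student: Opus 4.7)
My plan is to verify directly the two equalities $K(S\setminus H)=S\setminus H=(S\setminus H)K$ that define a nontrivial generalised wreath product with respect to $K$ and $H$; the containments $1<K\trianglelefteq H<R$ are already given. As a cheap first observation, since $K\leq H$ and $s\notin H$ for every $s\in S\setminus H$, the products $ks$ and $sk$ automatically lie in $R\setminus H$ for every $k\in K$; since $K$ is a group, the reverse inclusions $S\setminus H\subseteq K(S\setminus H)$ and $S\setminus H\subseteq(S\setminus H)K$ are immediate. So the task reduces to showing $Ks\subseteq S$ and $sK\subseteq S$ for every $s\in S\setminus H$.

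The key idea is to translate these target inclusions into coset conditions inside $G$. Any $\phi\in G_1$ maps the arc $(1,s)$ of $\Cay(R,S)$ to $(1,\phi(s))$, so $\phi(s)\in S$. Since $\hat k(s)=sk$, the element $sk$ belongs to $S$ whenever some $\phi\in G_1$ satisfies $\phi(s)=sk$, and this holds if and only if $\hat k\in G_1G_s$ (write $\phi=\hat k\cdot(\hat k^{-1}\phi)$ with $\hat k^{-1}\phi\in G_s$, and conversely). Similarly, using $ks=s\cdot(s^{-1}ks)=\hat{k^s}(s)$ with $k^s:=s^{-1}ks\in K^s$, the condition $ks\in S$ is equivalent to $\hat{k^s}\in G_1G_s$.

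Now I apply the hypothesis with $r=s\in R\setminus H$, identifying $G_1^s$ with the point stabiliser $G_s$ via the embedding $R\cong\hat R\leq\Aut(\Cay(R,S))$. The inclusion $G_1K\subseteq G_1G_s$ yields $\hat k\in G_1G_s$ for every $k\in K$, hence $sK\subseteq S$ for every $s\in S\setminus H$; the inclusion $G_1K^s\subseteq G_1G_s$ similarly yields $Ks\subseteq S$. Combined with the first step, this gives $(S\setminus H)K=S\setminus H=K(S\setminus H)$, as required.

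For the ``moreover'' part, when $S=S^{-1}$ the identities $K=K^{-1}$ and $H^{-1}=H$ give $((S\setminus H)K)^{-1}=K(S\setminus H)$, so the two equalities are equivalent and only one coset inclusion $G_1K\subseteq G_1G_1^r$ is needed. The only real obstacle I foresee is bookkeeping: keeping the conjugation convention for $G_1^r$ consistent with the point stabiliser $G_s$. The extra hypothesis $H\leq\N_R(G_1)$ does not seem to be strictly needed in the argument above, but it guarantees coset-invariance $G_1^{hr}=G_1^r$ for $h\in H$, which makes the hypothesis a natural condition to verify in practice.
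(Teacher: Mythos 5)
Your proof is correct in substance and follows essentially the same route as the paper's: both translate the coset inclusions into statements about images of vertices of $S\setminus H$ under $G_1$, via the standard dictionary between right cosets of $G_1$ and vertices (the paper manipulates the double cosets directly, you factor $\hat k=\phi\gamma$; these are the same computation). The one thing to repair is the convention clash you yourself anticipated. The identification $G_1^s=G_s$ holds for the right-action convention $G_v^g=G_{v^g}$, but your factorisation step ($\hat k=\phi\gamma$ with $\gamma\in G_s$ gives $\hat k(s)=\phi(\gamma(s))=\phi(s)$) uses function composition, under which $\hat s^{-1}G_1\hat s$ stabilises $s^{-1}$, not $s$; with a single consistent convention one finds either that the correct criterion is $\hat k\in G_sG_1$ rather than $G_1G_s$, or that the hypothesis should be invoked at $r=s^{-1}$ rather than $r=s$. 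This is harmless here because $K=K^{-1}$ and $R\setminus H$ is closed under inversion, so the inclusion you actually obtain (say $sK^{-1}\subseteq S$ instead of $sK\subseteq S$) is the same set-theoretic statement; it also explains why your pairing of hypotheses with conclusions is the reverse of the paper's (there, $G_1K\subseteq G_1G_1^r$ yields $K(S\setminus H)=S\setminus H$), again without affecting the final result. Two further remarks: your observation that $ks,sk\notin H$ already follows from $K\leq H$ and $s\notin H$ lets you bypass the paper's use of $H\leq\N_R(G_1)$ (which they invoke only to see that $G_1$ preserves $S\setminus H$), so you are right that this hypothesis is not logically needed for the conclusion; and your handling of the ``moreover'' clause via $((S\setminus H)K)^{-1}=K(S\setminus H)$ is exactly the equivalence the paper records just after the definition of generalised wreath product.
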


\begin{proof}
Since $H\leq \N_R(G_1)$, we have $G_1H^{G_1}=G_1HG_1=G_1H$. Note that $G\leq \Aut(\Cay(R,S))$ implies that $S^{G_1}=S$  hence $G_1(S \setminus H)^{G_1}=G_1 (S \setminus H)$. If, for every $r \in R \setminus H$, we have $G_1K \subseteq G_1G_1^r$, then we have $$G_1Kr^{-1} \subseteq G_1 G_1^r r^{-1}=G_1r^{-1}G_1=G_1(r^{-1})^{G_1}$$
and it follows that $G_1K( S \setminus H) \subseteq G_1 (S \setminus H)^{G_1}=G_1(S \setminus H)$, which implies $K(S \setminus H) =S \setminus H$. Likewise, if for every $r \in R \setminus H$, we have $G_1K^r \subseteq G_1G_1^r$, then $$G_1r^{-1}K \subseteq G_1r^{-1}G_1=G_1(r^{-1})^{G_1}$$
and it follows that $G_1( S \setminus H)K \subseteq G_1 (S \setminus H)^{G_1}=G_1 (S \setminus H)$, which implies $(S \setminus H)K =S \setminus H$.

Hence, if $G_1K,G_1K^r\subseteq G_1G_1^r$ for every $r\in R\setminus H$, then $K(S\setminus H)=S \setminus H=(S\setminus H)K$ hence $\Cay(R,S)$ is a nontrivial generalised wreath product. 
Moreover, if $S=S^{-1}$, then as previously noted, $K(S\setminus H)=S \setminus H$ suffices to reach the same conclusion.  
\end{proof}

\section{Groups of squarefree order, generic case}\label{sec:squarefreegen}

The structure of  groups of squarefree order has been well understood since the work of H\"older~\cite{HolderSquarefree}.  An obvious observation is that every subgroup of such a group is a Hall subgroup hence every normal subgroup is characteristic. H\"older proved that these groups are metacyclic. In particular, if $R$ is a group of squarefree order, we have $R\cong \C_t \times (\C_n\rtimes \C_m)$, where $\Z(\C_n\rtimes \C_m)=1$ (and $t$, $n$ and $m$ are pairwise coprime). (We use the usual notation $\Z(G)$ for the centre of the group $G$.) An easy consequence of this is the fact that, for every set of primes $\pi$ dividing $|R|$, $R$ has a Hall $\pi$-subgroup. Finally, we will make frequent use of the fact that if $p$ and $q$ are primes with $q>p$ and $X$ is a nonabelian subgroup of order $pq$, then  $q\equiv 1 \pmod p$. We will also make use of the following result.

\begin{lemma}\label{some-sources-cover}
Let $R\cong \C_n\rtimes \C_m$ be a group of squarefree order with trivial center. Suppose that, for every pair of primes $p$ and $q$ with $p \mid m$ and $q \mid n$, every subgroup of $R$ of order $pq$ is nonabelian. Let $H$ be a characteristic subgroup of prime index in $R$. If $m$ is not prime, then $\Cent_{\Aut(R)}(H)=1$.
\end{lemma}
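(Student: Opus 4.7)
The plan is to first pin down $H$ using the hypothesis, then show that any $\phi\in\Cent_{\Aut(R)}(H)$ is determined up to two parameters by its value on a generator outside $H$, and finally to use two commutator identities to kill both parameters. Write $R=N\rtimes M$ with $N=\C_n=\langle x\rangle$ and $M=\C_m$, and let $p$ be the prime with $[R:H]=p$.

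I would first show that the hypothesis forces $[R,R]=N$. Indeed $[R,R]=[N,M]\leq N$ since $N$ and $M$ are abelian; and if $[N_q,M]$ were trivial for some prime $q\mid n$, where $N_q$ is the Sylow $q$-subgroup of $N$, then any Sylow $\ell$-subgroup of $M$ would centralise $N_q$, yielding an abelian subgroup of order $\ell q$ and contradicting the hypothesis. Since $R/H$ has prime order it is abelian, so $N\leq H$ and $p\mid m$. Decomposing $M=M_0\times M_1$ with $|M_0|=m/p$ and $|M_1|=p$, it follows that $H=NM_0$, and $M_1$ is characteristic in $M\cong R/N$, so $M_1N$ is characteristic in $R$.

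Now let $\phi\in\Cent_{\Aut(R)}(H)$; then $\phi$ fixes $N$ and $M_0$ pointwise. Picking generators $y_0\in M_0$ and $y_1\in M_1$, the preservation of $M_1N$ under $\phi$ gives $\phi(y_1)=x^{a}y_1^{j}$ for some $a\in\ZZ/n\ZZ$ and $j\in(\ZZ/p\ZZ)^{\times}$. Applying $\phi$ to $y_0y_1=y_1y_0$ and using $y_0xy_0^{-1}=x^{\ell_0}$ yields $a(\ell_0-1)\equiv 0\pmod n$; the hypothesis (applied to each Sylow subgroup of the group $M_0$, nontrivial precisely because $m$ is not prime) guarantees $\ell_0\not\equiv 1\pmod q$ for every $q\mid n$, so $\ell_0-1$ is a unit modulo $n$ and $a=0$. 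Next, applying $\phi$ to $y_1xy_1^{-1}=x^{\kappa}$ gives $\kappa^{j-1}\equiv 1\pmod n$; the hypothesis applied to $M_1$ and each $N_q$ shows that $\kappa$ has order exactly $p$ modulo $n$, forcing $j\equiv 1\pmod p$. So $\phi(y_1)=y_1$, and since $R=H\langle y_1\rangle$ this gives $\phi=1$.

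The main obstacle is really the initial structural reduction to the case $p\mid m$ and $H\supseteq N$: without it, one would face a characteristic subgroup of index a prime dividing $n$ and a potentially nontrivial centraliser coming from power-map automorphisms of a Sylow subgroup of $N$. Once that reduction is in place, the role of the assumption ``$m$ is not prime'' is exactly to provide a nontrivial $y_0\in M_0$ against which to test $\phi(y_1)$ via the commutation $y_0y_1=y_1y_0$, after which the argument is a short commutator computation.
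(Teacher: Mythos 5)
Your proof is correct and follows essentially the same strategy as the paper's: reduce to $N\le H$ and $p\mid m$, constrain the image of a generator outside $H$ using the fixed-point-freeness of the action guaranteed by the hypothesis, and finish using a nontrivial element of $M$ lying in $H$, which exists precisely because $m$ is not prime. The paper streamlines the bookkeeping by working with a single element $x$ of order $m$ --- any automorphism must send $x$ into $Yx$ because $x^{\alpha}$ induces the same power map on $Y=\C_n$ --- so only one parameter remains to be killed, via the commutation of $x^p\in H\setminus Y$ with $x^\alpha$, whereas you track the two parameters $a$ and $j$ separately.
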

\begin{proof}
Let $x$ and $y$ be elements of order $m$ and $n$ in $R$, respectively and let $Y=\langle y\rangle$. Note that $Y$ is a characteristic subgroup of $R$ and there exists an integer $j$ such that, for every $y'\in Y$, we have $(y')^x=(y')^{j}$. Now, if $\alpha\in\Aut(R)$, then $x^\alpha$ must also have this property, that is, $(y')^{x^\alpha}=(y')^{j}$ for every $y'\in Y$. An easy calculation shows that this implies that $x^\alpha\in Yx$, say $x^\alpha=y^ix$. Let $p=|R:H|$. Note that, since $H$ is normal in $R$, we have $Y\leq H$ and $p$ divides $m$.  Let $\alpha\in \Cent_{\Aut(R)}(H)$. Since $m$ is not prime, we have $x^p\in H\setminus Y$ so $(x^p)^\alpha=x^p$. Since $x^p$ commutes with $x$, $(x^p)^\alpha=x^p$ commutes with $x^\alpha=y^ix$. It follows that $x^p$ commutes with $y^i$.  If $y^i\neq 1$, then by hypothesis, $\Cent_R(y^i)=Y$ but $x^p\notin Y$ hence we must have $y^i=1$. This implies that $x^\alpha=x$  and $\alpha=1$, as required.
\end{proof}

In our first main result, Theorem~\ref{main-GRR}, we construct Cayley graphs $\Cay(R,S)$ that are nontrivial generalised wreath products with respect to some subgroups $K$ and $H$ of $R$, and that witness that $R$ is not GRR-detecting. One component of our construction involves taking a GRR on $H$ when possible. In order to understand when this is possible, we need to know which groups of squarefree order admit GRRs. Although many researchers including Watkins, Imrich, Nowitz, and Hetzel made significant contributions along the way, the ultimate result about which groups admit GRRs is due to Godsil. We provide a statement of his result that makes it easy to see which of these groups have squarefree order.

\begin{theorem}\cite{Godsil}\label{Godsil-GRRs}
Every group admits a GRR except:
\begin{itemize}
\item abelian groups of exponent greater than $2$;
\item generalised dicyclic groups (which have orders divisible by $4$);
\item the dihedral groups $\D_6$ and $\D_{10}$; and
\item eleven other small groups, none of whose orders is squarefree.
\end{itemize}
\end{theorem}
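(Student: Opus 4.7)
Plan: This theorem concludes a long line of work by Imrich, Watkins, Nowitz, Hetzel, and Godsil, so any proof proposal must follow their overall template. The statement naturally splits into a necessary direction (the listed classes admit no GRR) and a sufficient direction (all other groups do).

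For the necessary direction, I would exhibit in each exceptional class a non-trivial automorphism of $R$ that fixes every inverse-closed subset setwise, which via Theorem~\ref{aut-R-S} blocks the graph from ever being a GRR. If $R$ is abelian of exponent greater than $2$, the inversion map $x\mapsto x^{-1}$ is such an automorphism. For a generalised dicyclic group $\mathrm{Dic}(A)=\langle A,y\mid y^2=a_0,\, y^{-1}ay=a^{-1}\rangle$ with $A$ abelian, the map that inverts $A$ pointwise while fixing the coset $Ay$ elementwise is a non-trivial automorphism of $R$; it preserves every inverse-closed $S$ because each element of $Ay$ is self-inverse (since $(ay)^2 = a_0$ and $y^{-1}ay=a^{-1}$ forces $(ay)^{-1} \in Ay$) and each inverse-closed subset of $A$ is visibly preserved by inversion. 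For $\D_6$, $\D_{10}$, and the eleven other small exceptions, one performs a finite check, either by hand or by computer, verifying that every inverse-closed $S\subseteq R$ with $\Aut(R)_S=1$ yields a Cayley graph whose automorphism group strictly exceeds $\hat R$.

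For the sufficient direction, the standard approach is a counting argument: for each group $R$ outside the listed classes, one bounds the number of inverse-closed subsets $S$ for which $\Cay(R,S)$ is not a GRR, and shows this bound is strictly smaller than the total number of inverse-closed subsets of $R$. A Cayley graph that is not a GRR has some automorphism $\alpha\notin\hat R$; either $\alpha$ normalises $\hat R$ (and so induces a non-trivial element of $\Aut(R)_S$), or $\hat R$ lies inside a strictly larger transitive subgroup $G\leq\Sym(R)$ preserving $S$. For the first case, one uses a bound on the number of $S$ fixed by a prescribed non-identity $\varphi\in\Aut(R)$ in terms of $|R|$ and the fixed-point count of $\varphi$. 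For the second case, one enumerates possible transitive overgroups $G$ of $\hat R$ and bounds the number of $G$-invariant inverse-closed subsets in terms of the orbits of $G_1$ on $R\setminus\{1\}$.

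The main obstacle is the overgroup portion of the counting argument: controlling proper transitive overgroups of $\hat R$ in $\Sym(R)$ requires substantial permutation-group structure (and was only feasible historically by distinguishing nonabelian $p$-groups, groups with a large abelian normal subgroup, and almost simple situations). The crude bound produced this way is only sharp for sufficiently large $R$, so one still has to process a large but finite list of small groups by ad hoc methods to isolate the eleven exceptions, and that case analysis is what made the original proof long.
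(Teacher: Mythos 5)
First, note that the paper does not prove this statement: it is quoted with a citation as the endpoint of the GRR classification due to Watkins, Imrich, Nowitz, Hetzel and Godsil, so there is no internal proof to compare yours against. Your outline does track the historical strategy --- exhibit a set-preserving automorphism for each exceptional family, then run a counting/overgroup argument for the sufficiency half --- but one of your concrete steps is wrong. In a generalised dicyclic group the elements of the coset $Ay$ are \emph{not} self-inverse: you yourself record $(ay)^2=a_0$, and $a_0\neq 1$ (it is an involution, so every element of $Ay$ has order $4$; this is exactly what distinguishes generalised dicyclic from generalised dihedral groups). Consequently the map you propose, inverting $A$ pointwise and fixing $Ay$ elementwise, is not even a homomorphism: for $a,b\in A$ it sends the product $a\cdot by=(ab)y$ to $(ab)y$, but sends the factors to $a^{-1}$ and $by$, whose product is $(a^{-1}b)y$, so multiplicativity fails unless $a^2=1$. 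The automorphism you want is the ``dual'' one: fix $A$ pointwise and invert every element of $R\setminus A$. One checks directly from $y^{-1}ay=a^{-1}$ and $(ay)^{-1}\in Ay$ that this is an automorphism, and it preserves every inverse-closed $S$ because it is the identity on $S\cap A$ and inversion on $S\setminus A$.

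Second, the sufficiency half of your proposal is a plan rather than a proof: the counting argument over transitive overgroups of $\hat R$ is precisely the part that required Hetzel's work in the soluble case and Godsil's in the nonsolvable case, and nothing in your sketch produces the list of eleven exceptional groups or the bounds that isolate them. That is acceptable for a result the paper imports wholesale from the literature, but it should be flagged as such rather than presented as a completable argument. For the purposes of this paper, the only content actually used is the observation recorded immediately after the theorem: among nonabelian groups of squarefree order, exactly $\D_6$ and $\D_{10}$ fail to admit a GRR, since squarefree order rules out the generalised dicyclic groups (order divisible by $4$) and the eleven sporadic exceptions.
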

It follows that the only nonabelian groups of squarefree order that do not admit a GRR are $\D_6$ and $\D_{10}$. We are now ready to show that ``most'' groups of squarefree order are not GRR-detecting (and thus not DRR-detecting). 
(We do set aside a number of special cases for further consideration in Section~\ref{squarefreeExceptional}.)

\begin{theorem}\label{main-GRR}
Let $R$ be a group of squarefree order. If $R$ is not abelian and $R\notin \{\D_6, \D_{10},\D_{30}, \D_6 \times \C_q, \D_{10}\times \C_q, \C_q\rtimes \C_p: p,q \text{ primes}\}$, then $R$ is not GRR-detecting. 
\end{theorem}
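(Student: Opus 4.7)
The plan is to apply Corollary~\ref{DRRProp1}: for each admissible $R$, produce an inverse-closed $S\subseteq R$ and characteristic subgroups $1<K\le H<R$ with $K(S\setminus H)=S\setminus H$ (the two-sided condition follows automatically since $S=S^{-1}$ and $K\trianglelefteq R$) and $\Aut(R)_S=1$. By H\"older's theorem, write $R\cong \C_t\times(\C_n\rtimes \C_m)$ with $\Z(\C_n\rtimes \C_m)=1$ and $t,n,m$ pairwise coprime; every subgroup of $R$ is then a characteristic Hall subgroup. The only nonabelian squarefree-order groups on two primes are the $\C_q\rtimes\C_p$, which are explicitly excluded from the hypothesis, so I may assume $|R|$ has at least three prime divisors; this is what gives enough room to choose $H$.

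I will select $H$ to be a characteristic subgroup of prime index $p$ in $R$ satisfying (i) $H$ admits a GRR (so by Theorem~\ref{Godsil-GRRs}, $H\notin\{\D_6,\D_{10}\}$ and $H$ is not abelian of exponent greater than $2$), and (ii) the pointwise stabiliser $\Cent_{\Aut(R)}(H)$ in $\Aut(R)$ is trivial. Condition~(ii) is delivered by Lemma~\ref{some-sources-cover} when $p\mid m$ and $m$ is composite, and by minor variants of that argument in the other situations. A case analysis on the factorisation of $|R|$ and on which factor contributes the extra prime shows that such an $H$ exists for every admissible $R$. For $K$ I take any nontrivial characteristic subgroup of $R$ with $K\le H$; a convenient choice is the Sylow $q$-subgroup of $R$ for any prime $q\mid n$, which is characteristic in $R$ (as the unique Hall $\{q\}$-subgroup, lying inside the normal cyclic subgroup $\C_n$).

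Next I construct $S=S_H\cup T$. Let $S_H\subseteq H\setminus\{1\}$ be inverse-closed with $\Cay(H,S_H)$ a GRR, available by~(i) and Theorem~\ref{Godsil-GRRs}. Let $T\subseteq R\setminus H$ be any nonempty symmetric union of $K$-cosets, say $T=Kr\cup Kr^{-1}$ for some $r\in R\setminus H$ (with $(Kr)^{-1}=Kr^{-1}$ by normality of $K$). By construction $K(S\setminus H)=KT=T=S\setminus H$, so $\Cay(R,S)$ is a nontrivial generalised wreath product with respect to $K$ and $H$. Any $\alpha\in\Aut(R)_S$ normalises the characteristic subgroup $H$, hence restricts to an element of $\Aut(H)_{S_H}$, which is trivial by Theorem~\ref{aut-R-S} applied to the GRR $\Cay(H,S_H)$. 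So $\alpha$ fixes $H$ pointwise, placing $\alpha\in\Cent_{\Aut(R)}(H)=1$ by~(ii). Therefore $\Aut(R)_S=1$, and Corollary~\ref{DRRProp1} gives the desired witness.

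The main obstacle is the case analysis at the heart of the second paragraph: ensuring that~(i) and~(ii) can be met simultaneously. The excluded groups $\D_{30}$, $\D_6\times\C_q$, and $\D_{10}\times\C_q$ are precisely those for which every characteristic subgroup of prime index is either $\D_6$, $\D_{10}$, or abelian of exponent greater than $2$, which is exactly what forces~(i) to fail. Outside these (and outside $\C_q\rtimes\C_p$, which is addressed separately), a suitable $H$ is obtained by removing a prime from $m$ when $m$ is composite, and otherwise from $t$ or from $n$; verifying~(ii) in each such case is a short calculation analogous to Lemma~\ref{some-sources-cover}.
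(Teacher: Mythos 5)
Your template --- take a characteristic subgroup $H$ of prime index admitting a GRR with $\Cent_{\Aut(R)}(H)=1$, put a GRR connection set inside $H$ and a symmetric union of $K$-cosets outside --- is the right first idea (it is essentially the paper's easiest subcase, where $m$ is composite and every relevant subgroup of order $pq$ is nonabelian), but the case analysis you defer to the second and fourth paragraphs cannot be completed as claimed, and that case analysis is where the real content of the theorem lives. First, your assertion that the excluded groups are ``precisely those for which every characteristic subgroup of prime index'' fails condition (i) is false. Take $R=(\C_7\times\C_{13})\rtimes\C_3$ with the $\C_3$ acting nontrivially on both factors: $R$ is admissible, but $R/R'\cong\C_3$, so the \emph{only} normal (hence only characteristic) subgroup of prime index is $\C_{91}$, which is abelian of exponent greater than $2$ and has no GRR. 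No choice of $H$ satisfying (i) exists. The paper handles exactly this situation (Case 1 with $m=p$ prime) by taking a \emph{non-normal} subgroup $H$ of order $pq$, and must then invoke Proposition~\ref{char-H-blocks} --- with a connection set engineered so that $K(S\setminus K)\neq S\setminus K$ --- to show that $\Aut(R)_S$ normalises $H$ at all; none of this is available in your setup.

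Second, condition (ii) also genuinely fails for admissible groups. Take $R=\C_{11}\times(\C_7\rtimes\C_3)$: the only characteristic prime-index subgroup admitting a GRR is $H=\C_7\rtimes\C_3$ (index $11$), and $\Cent_{\Aut(R)}(H)\supseteq\Aut(\C_{11})\cong\C_{10}\neq 1$, so no ``minor variant'' of Lemma~\ref{some-sources-cover} can rescue the claim. In such cases the automorphisms fixing $H$ pointwise must be killed by the part of $S$ outside $H$, and your choice of ``any'' symmetric union $T=Kr\cup Kr^{-1}$ does not do this: with $r=z$ a central element of order $11$ and $K$ the Sylow $7$-subgroup, the automorphism inverting $\langle z\rangle$ and fixing $H$ pointwise preserves $Kz\cup Kz^{-1}$, so $\Aut(R)_S>1$ and $\Cay(R,S)$ is not a witness. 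The paper's Case 2 (which also covers the situation where $H$ must be taken abelian, with $S\cap H=\{kg,(kg)^{-1}\}$ rather than a GRR connection set) uses three carefully positioned cosets $Kx^{\pm1}\cup K(gx)^{\pm1}\cup K(g^3x)^{\pm1}$ and a delicate order/commutation analysis to pin the automorphism down; some such construction is unavoidable. A smaller point: not every subgroup of a squarefree-order group is characteristic --- only the normal ones are (every subgroup is Hall, so normal implies characteristic) --- which is precisely why the existence of a characteristic subgroup of prime index $q$ fails when $q\mid n$.
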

\begin{proof}
We can assume that $|R|$ has at least three prime divisors, since we have excluded the other possibilities. Let $R\cong \C_t \times (\C_n \rtimes \C_m)$, where $\Z(\C_n \rtimes \C_m)=1$. Since $R$ is nonabelian, we have $n,m\geq 2$.  We now split the proof into two cases.

\medskip

{\noindent\textbf{Case 1: For every pair of primes $p$ and $q$ with $p \mid m$ and $q \mid nt$, every subgroup of $R$ of order $pq$ is nonabelian.}}

In this case, we have $t=1$. Let $p$ be the smallest prime dividing $m$. If $m\neq p$, then take $H$ to be the characteristic subgroup of index $p$ in $R$, so $H\cong \C_n\rtimes \C_{m/p}$, with $m/p\geq 3$. In particular, $H$ admits a GRR. Let $S$ be the connection set for a GRR on $H$.  Since $H$ is characteristic in $R$, it is fixed by $\Aut(R)_S$ and thus so is $S\cap H=S$. Since $\Cay(H, S \cap H)$ is a GRR, it follows that $\Aut(R)_S\leq \Cent_{\Aut(R)}(H)$. By Lemma~\ref{some-sources-cover}, $\Cent_{\Aut(R)}(H)=1$ hence $\Aut(R)_S=1$ and by Corollary~\ref{DRRProp1} (applied with $K=H$), $R$ is not GRR-detecting.

We may thus assume that $m=p$. Since $|R|$ has at least three prime divisors and $t=1$, $n$ is not prime. Let $q$ be the largest prime dividing $n$ and write $n=qn'$. Recall that every prime divisor of $n$ must be $1$ modulo $p$. If $p\geq 3$, then it immediately follows that $q\geq 7$. If $p=2$, the only other possibility is $(q,n)=(5,15)$, but this is excluded by our hypothesis, hence $q\geq 7$ in either case. Let $K$ be the characteristic subgroup of order $q$ in $R$, and $H$ a subgroup of order $pq$. Note that $H$ is nonabelian and, since $q\geq 7$, $H$ admits a GRR.

Let $k \in K$ have order $q$, let $h \in H$ have order $p$, and let $x \in R\setminus H$ have order $n'$. Let $S'$ be the connection set for a GRR on $H$, and let $S=S' \cup Khx \cup K(hx)^{-1}$. Suppose that $K(S\setminus K)=S\setminus K$. This implies that $K(S'\setminus K)=S'\setminus K$ and then Lemma~\ref{GenWreath} implies that $\Cay(H,S')$ is not a DRR, a contradiction. It follows that $K(S\setminus K)\neq S\setminus K$. Let $\alpha \in\Aut(R)_S$. By Proposition~\ref{char-H-blocks}, $\alpha$ normalises $H$, so since $\Cay(H, S \cap H)$ is a GRR, $\alpha$ fixes $H$ pointwise. The neighbourhood of $h \in H$ outside $H$ is $Khxh\cup Kx^{-1}=Kx^{h^{-1}}h^2\cup Kx^{-1}$, which must therefore be fixed setwise by $\alpha$. Note that $Kx^{-1}$ has a unique element of order $n'$, namely $x^{-1}$, and all its other elements have order $n$. If $p=2$, then $Khxh=Kx^{-1}$, whereas if $p\geq 3$, then every element of $Kx^{h^{-1}}h^2$ has order $p$. Either way,  $\alpha$ must fix $x^{-1}$ and thus fix $R$ pointwise so $\alpha=1$. We conclude that $\Aut(R)_S=1$ and, by Corollary~\ref{DRRProp1}, $R$ is not GRR-detecting. 

\medskip

{\noindent\textbf{Case 2: There exists primes $p$ and $q$ with $p \mid m$ and $q \mid nt$ such that $R$ has a cyclic subgroup of order $pq$.}}

Choose $p$ and $q$ satisfying the above with $p$ as large as possible. Since $R$ is nonabelian,  $nt$ is not prime. Let $H$ be the characteristic subgroup of index $p$ in $R$. Since $nt$ is not prime, $H$ is not isomorphic to $\D_6$ or $\D_{10}$, hence either $H$ admits a GRR or $H\cong \C_{nt}$.

Let $r=nt/q$. Note that every  element of order $p$ in $R$ must act nontrivially on some subgroup of the cyclic subgroup of order $r$ in $R$. We show that $r>5$. Suppose, by contradiction, that $r\leq 5$. This implies $p=2$ and $r \in \{3,5\}$. If $m=p=2$, then $R \cong \D_{2r} \times \C_q$, a case we have excluded by hypothesis. So $m>p$ and there is a prime $p'$ dividing $m$ with $p'>2$. Since $r\in \{3,5\}$, $R$ has a cyclic subgroup of order $p'r$ but this contradicts our choice of $p$. It follows that $r>5$.

Let $k$, $g$ and $x$ be elements of order $q$, $r$ and $p$ in $R$, respectively. Let $K=\langle k\rangle$ and $B=\langle kg\rangle$. Note that $B\cong \C_{nt}$ and that $K$ and $B$ are both characteristic in $R$ and contained in $H$. If $H$ is nonabelian, then take $S'$ to be the connection set for a GRR on $H$; if $H \cong \C_{nt}$ then take $S'=\{kg,(kg)^{-1}\}$.  Let $S=S' \cup Kx^{\pm1} \cup K(gx)^{\pm1} \cup K(g^3x)^{\pm1}$. Note that these really are three distinct cosets of $K$ since $|g|=r>5$. Note also that $S\cap H=S'$ and that $S\setminus S'\subseteq Bx^{\pm1}$. 

Let $\alpha \in \Aut(R)_S$. Since $H$ is characteristic in $R$,  $H^\alpha=H$ hence $(S')^\alpha=S'$. By our choice of $S'$, it follows that $(kg)^\alpha=(kg)^{\pm1}$ which implies $g^\alpha=g^{\pm1}$. Write $x^\alpha=bx^{\epsilon}$, with $b\in B$ and $\epsilon=\pm 1$. Note that 
$$(g^\alpha)^x=(g^x)^\alpha=(g^\alpha)^{x^\alpha}=(g^\alpha)^{bx^{\epsilon}}=(g^\alpha)^{x^{\epsilon}}.$$
This implies that $\epsilon=1$, so $x^\alpha\in Bx$ and therefore $\alpha$ fixes $Kx \cup Kgx \cup Kg^3x$. Since $K^\alpha=K$, $\alpha$ must permute these three $K$-cosets. Write $(Kx)^\alpha=Kg^ix$ with $i \in \{0,1,3\}$.  It follows that $(Kgx)^\alpha = (gKx)^\alpha = g^{\pm 1}Kg^i x=Kg^{i\pm1}x$ and $i\neq 3$. Moreover, if $i=1$ then $g^\alpha=g^{-1}$ and $\alpha$ interchanges $Kg$ and $Kgx$, so must fix $Kg^3x$, so $Kg^3x=(Kg^3x)^\alpha=Kg^{-3+1}x$ and $g^5=1$, contradicting $|g|=r>5$. It follows that $i=0$ and $\alpha$ fixes $Kx$, $Kgx$, and $Kg^3x$. Since $x$ and $k$ commute, $x$ is the unique element of order $p$ in $Kx$, so it is fixed by $\alpha$. Similarly, $gx$ is the unique element of $Kgx$ whose order is not a multiple of $q$, so it too is fixed by $\alpha$ hence so is $g$. It follows that $\alpha$ centralises $H$ and $\alpha=1$. This shows that $\Aut(R)_S=1$ and it follows from Corollary~\ref{DRRProp1} that  $R$ is not GRR-detecting.

\end{proof}


\section{Groups of squarefree order, exceptional cases}\label{squarefreeExceptional}

In this section we proceed through the groups that were excluded in the hypothesis of Theorem~\ref{main-GRR}.  We begin with the three sporadic groups. The following result can be checked by computer.

\begin{proposition}\label{prop:sporadic}
$\D_6$ and $\D_{10}$ are DRR-detecting (and therefore GRR-detecting). $\D_{30}$ is GRR-detecting but not DRR-detecting.
\end{proposition}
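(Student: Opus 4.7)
The plan is a computer-aided case analysis, handled separately for the three groups. For $\D_6$ and $\D_{10}$, both groups are small enough that brute force suffices: iterate over all $2^{|R|}$ subsets $S \subseteq R$, retain those with $\Aut(R)_S = 1$, and for each such $S$ verify $|\Aut(\Cay(R,S))| = |R|$. Since $|\Aut(\D_6)| = 6$ and $|\Aut(\D_{10})| = 20$, the filtering step is trivial, and the remaining automorphism-group computations are routine for \textsc{GAP}, \textsc{Sage}, or \textsc{Magma}. One can further reduce the workload by quotienting the power set $2^R$ by the action of $\Aut(R)$, but this is not essential.

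For the ``not DRR-detecting'' half of the $\D_{30}$ claim, the plan is to exhibit an explicit witness via Corollary~\ref{DRRProp1}. Write $\D_{30} = \langle r, s \mid r^{15} = s^2 = 1,\ srs = r^{-1}\rangle$; its proper nontrivial characteristic subgroups are $\langle r^5\rangle$, $\langle r^3\rangle$, and $\langle r\rangle$, of orders $3$, $5$, and $15$. With $H = \langle r\rangle$ and $K$ a nontrivial characteristic subgroup contained in $H$, any $S$ whose outside part $S \setminus H$ is a union of $K$-cosets of reflections automatically satisfies condition (\ref{doublecoset}), because $K$ is normal in $R$ so left and right $K$-cosets coincide. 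A short search over such $S$ yields one with $\Aut(\D_{30})_S = 1$; Lemma~\ref{GenWreath} then gives that $\Cay(\D_{30}, S)$ is not a DRR.

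For the ``GRR-detecting'' half of the $\D_{30}$ claim, the enumeration is larger but still tractable. The inverse-closed subsets of $\D_{30} \setminus \{1\}$ are parameterised by choosing any subset of the $15$ reflections together with any subset of the $7$ inverse-pairs in $\langle r\rangle \setminus \{1\}$, giving $2^{22}$ candidates. After quotienting by the order-$120$ action of $\Aut(\D_{30})$ and discarding those with nontrivial setwise stabiliser, the remaining orbit representatives form a manageable list on which one directly verifies $|\Aut(\Cay(\D_{30}, S))| = 30$ for each surviving $S$. The main obstacle is purely computational — organising the orbit enumeration efficiently enough that the automorphism-group tests can be run without excessive cost — and no further theoretical input is needed.
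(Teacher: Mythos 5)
The paper offers no written argument for this proposition---it simply records that the result ``can be checked by computer''---so your overall plan of an explicit computational verification is in the same spirit, and the exhaustive checks you describe for $\D_6$, $\D_{10}$, and for the GRR-detecting half of $\D_{30}$ are sound (the $2^{22}$ inverse-closed candidates for $\D_{30}$, reduced modulo the order-$120$ automorphism group, are indeed tractable).

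However, your proposed witness for ``$\D_{30}$ is not DRR-detecting'' cannot work. You take $H=\langle r\rangle$ and $K$ a nontrivial characteristic subgroup of $H$, with $S\setminus H$ a union of $K$-cosets of reflections. But $H=\langle r\rangle$ is abelian, so $\Z(H)\cap K=K$, while $\Z(\D_{30})=1$ because $15$ is odd; hence $\Z(H)\cap K\nleq \Z(\D_{30})$ and Lemma~\ref{UsingConjugation} forces $\Aut(\D_{30})_S>1$ for \emph{every} such $S$. Concretely, conjugation by any nontrivial $k\in K$ fixes $\langle r\rangle$ (hence $S\cap H$) pointwise and sends each reflection $t\in S\setminus H$ to $k^{-1}tk\in KtK=Kt\subseteq S\setminus H$, so it lies in $\Aut(\D_{30})_S$. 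Your ``short search'' would therefore return nothing, and hypothesis (3) of Corollary~\ref{DRRProp1} is never satisfied by this family. To salvage a generalised-wreath-product witness you would at minimum need $H$ nonabelian, i.e.\ $H\cong\D_6$ or $\D_{10}$ (neither of which is characteristic in $\D_{30}$, so controlling $\Aut(\D_{30})_S$ becomes harder), and it is not evident that any such witness exists. The reliable fix is to drop the wreath-product ansatz for this half and instead let the computer search over all subsets $S$ with $\Aut(\D_{30})_S=1$ for one with $|\Aut(\Cay(\D_{30},S))|>30$, exactly as you already do for the other two groups.
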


We next deal with abelian groups. We divide these into two classes, according to whether or not their order is prime. 

\begin{proposition}\label{prop:primeorder}
Groups of prime order are DRR-detecting (and therefore GRR-detecting).
\end{proposition}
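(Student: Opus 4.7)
The plan is to combine Theorem~\ref{aut-R-S} with Burnside's classical theorem on transitive permutation groups of prime degree. Suppose $|R|=p$ is prime and $S\subseteq R$ satisfies $\Aut(R)_S=1$. Set $A=\Aut(\Cay(R,S))$; since $\hat R\leq A$, $A$ is a transitive permutation group on the $p$-element set $R$, and contains the regular cyclic subgroup $\hat R$ as a Sylow $p$-subgroup.

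By Burnside's theorem, $A$ either (i) has a normal Sylow $p$-subgroup (which is then automatically regular), or (ii) is $2$-transitive. In case (i), the normal Sylow $p$-subgroup is necessarily $\hat R$, so $A=\N_A(\hat R)$. Applying Theorem~\ref{aut-R-S} together with the hypothesis $\Aut(R)_S=1$ gives $A=\hat R\rtimes\Aut(R)_S=\hat R$, so $\Cay(R,S)$ is a DRR, as required.

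In case (ii), the $2$-transitivity of $A$ on $R$ forces $\Cay(R,S)$ to be either the complete or the empty digraph on $R$, which means $S=R\setminus\{1\}$ or $S=\emptyset$. In either case $\Aut(R)_S=\Aut(R)$, which has order $p-1$; for $p\geq 3$ this directly contradicts the hypothesis $\Aut(R)_S=1$. The residual case $p=2$ is trivial, as $|\Sym(R)|=2=|\hat R|$ forces $A=\hat R$ regardless of $S$.

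The essential ingredient is Burnside's theorem on transitive groups of prime degree; once that is in hand, the rest reduces to a direct application of Theorem~\ref{aut-R-S}, so there is no real obstacle to overcome.
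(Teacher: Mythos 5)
Your proof is correct and follows essentially the same route as the paper's: both rest on the Burnside--Wielandt dichotomy for transitive groups of prime degree (normal Sylow $p$-subgroup versus $2$-transitive), apply Theorem~\ref{aut-R-S} in the first case, and observe that the $2$-transitive case forces a complete or empty digraph, contradicting $\Aut(R)_S=1$. Your treatment is only slightly more explicit about the empty digraph and the $p=2$ case.
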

\begin{proof}
Let $R$ be a group of prime order and let $S\subseteq R$. It is known that either $\hat{R}$ is normal in $\Aut(\Cay(R,S))$ or $\Aut(\Cay(R,S))$ is doubly transitive (see for example~\cite[Theorem 11.7]{Wielandt}). In the latter case, $\Cay(R,S)$ is a complete graph and $\Aut(\Cay(R,S))=\Sym(R)$. In either case, $\Aut(R)_S=1$ implies that $\Aut(\Cay(R,S))=\hat{R}$, and $R$ is DRR-detecting.
\end{proof}

\begin{proposition}\label{prop:abelian}
Abelian groups of squarefree composite order are GRR-detecting.
\end{proposition}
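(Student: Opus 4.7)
The plan is to prove this vacuously by establishing that the hypothesis $\Aut(R)_S=1$ (with $S$ inverse-closed) is never satisfied in this setting.

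The key observation is that, since $R$ is abelian, the inversion map $\iota\colon r\mapsto r^{-1}$ is a group automorphism of $R$, and since $S=S^{-1}$, it lies in $\Aut(R)_S$. Next I would argue that $\iota\neq 1$: because $R$ is abelian of squarefree order, $R$ is cyclic, and composite squarefree order forces $|R|\geq 6$, so $R$ contains an element of order strictly greater than $2$, which is not equal to its own inverse. Therefore $\langle\iota\rangle\leq\Aut(R)_S$ is a nontrivial subgroup, and the implication defining GRR-detection is vacuously satisfied.

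There is essentially no obstacle here: the argument is the short observation that abelianness produces the inversion automorphism while composite (squarefree) order makes it nontrivial. The conceptual content is that this is precisely why such groups fail to be DRR-detecting (as reflected in Theorem~\ref{theo:main}(2)(b)(i) and (3)(a)): once the requirement $S=S^{-1}$ is dropped, one can choose subsets on which $\iota$ does not act, so that $\Aut(R)_S=1$ becomes achievable, and genuine non-normal automorphisms of $\Cay(R,S)$ must then be constructed (typically via the generalised wreath product mechanism of Corollary~\ref{DRRProp1}).
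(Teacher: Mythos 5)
Your proof is correct and is essentially identical to the paper's: both observe that inversion is a nontrivial automorphism of $R$ (since $R$ is abelian of composite squarefree order) that fixes any inverse-closed $S$, so $\Aut(R)_S\neq 1$ and the detection condition holds vacuously. Your extra justification that inversion is non-identity is a welcome touch of rigour the paper leaves implicit.
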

\begin{proof}
Let $R$ be an abelian group of squarefree composite order and let $S\subseteq R$ with $S=S^{-1}$. Inversion is a non-identity automorphism of $R$ that preserves $S$ hence $\Aut(R)_S\neq 1$. This shows that $R$ is GRR-detecting.
\end{proof}

We still need to show that abelian groups of squarefree composite order are not DRR-detecting. In order to do so, we will use the following two results.

\begin{theorem}\cite[Theorem~1.9]{MMV-det}\label{direct-prod}
If $G_1$ and $G_2$ are nontrivial groups that admit a DRR (a GRR, respectively) and $\gcd(|G_1|,|G_2|)=1$, then $G_1 \times G_2$ is not DRR-detecting (not GRR-detecting, respectively).
\end{theorem}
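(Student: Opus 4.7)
My plan is to construct an explicit Cayley digraph on $R = G_1 \times G_2$ that is visibly a nontrivial wreath product (hence not a DRR by Lemma~\ref{GenWreath}), but whose connection set has trivial $\Aut(R)$-stabiliser. Fix DRR connection sets $T_1 \subseteq G_1$ and $T_2 \subseteq G_2$, identify $G_1$ and $G_2$ with their natural images in $R$, and take
$$S = T_1 \cup G_1 T_2.$$
Setting $K = H = G_1$, note that $S \setminus G_1 = G_1 T_2$, and since $G_1$ commutes with $G_2$ in the direct product, $G_1(S \setminus G_1) = G_1 T_2 = (S \setminus G_1) G_1$. Thus the wreath condition (\ref{doublecoset}) is satisfied with $K = H = G_1$, and moreover $1 < G_1 < R$ because both factors are nontrivial. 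So Lemma~\ref{GenWreath} immediately tells us that $\Cay(R,S)$ is not a DRR.

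Next I would verify $\Aut(R)_S = 1$. The key structural input is that $\gcd(|G_1|, |G_2|) = 1$ makes $G_1$ and $G_2$ characteristic Hall subgroups of $R$, and forces $\Aut(R) = \Aut(G_1) \times \Aut(G_2)$ acting componentwise. Any $\alpha = (\alpha_1, \alpha_2) \in \Aut(R)_S$ preserves $S \cap G_1 = T_1$; since $\Cay(G_1, T_1)$ is a DRR, Theorem~\ref{aut-R-S} yields $\Aut(G_1)_{T_1} = 1$, so $\alpha_1 = 1$. Then $\alpha$ also preserves $S \setminus G_1 = G_1 T_2$, and projecting this equality to the $G_2$-coordinate gives $\alpha_2(T_2) = T_2$; the DRR hypothesis on $G_2$ likewise forces $\alpha_2 = 1$. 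Hence $\Aut(R)_S = 1$, and Corollary~\ref{DRRProp1} concludes that $\Cay(R,S)$ witnesses that $R$ is not DRR-detecting.

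For the GRR version, I would take $T_1$ and $T_2$ to be GRR connection sets (so inverse-closed) and use the same $S$. Because $G_1$ is a subgroup and $T_2^{-1} = T_2$, we have $(G_1 T_2)^{-1} = T_2^{-1} G_1 = G_1 T_2^{-1} = G_1 T_2$, so $S = S^{-1}$. Thus $\Cay(R,S)$ is a Cayley graph, and the wreath-product verification and the argument that $\Aut(R)_S = 1$ go through unchanged.

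The only delicate point is the structural claim that $\Aut(R) = \Aut(G_1) \times \Aut(G_2)$ acts componentwise: this is where coprimality of $|G_1|$ and $|G_2|$ is essential, as otherwise automorphisms could mix the factors and the projection argument collapses. Everything else is routine verification of the wreath identity and of the DRR/GRR property being inherited from each factor.
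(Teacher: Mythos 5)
Your argument is correct, but there is nothing in the paper to compare it against: Theorem~\ref{direct-prod} is imported verbatim from \cite{MMV-det} and the present paper gives no proof of it. What you have written is a valid self-contained proof, and it is pleasant that it runs entirely on the paper's own machinery: the set $S=T_1\cup G_1T_2$ makes $\Cay(G_1\times G_2,S)$ a nontrivial wreath product with respect to $K=H=G_1$ (this digraph is just the lexicographic product of the two DRRs, which is the natural witness here), Lemma~\ref{GenWreath} kills the DRR property, and Corollary~\ref{DRRProp1} finishes once $\Aut(R)_S=1$ is established. Your structural input is also right: coprimality of the orders makes each $G_i$ characteristic (it is the set of elements whose order divides $|G_i|$), so $\Aut(G_1\times G_2)=\Aut(G_1)\times\Aut(G_2)$ acts componentwise, and then $\Aut(G_1)_{T_1}=\Aut(G_2)_{T_2}=1$ follows from the DRR hypothesis via Theorem~\ref{aut-R-S}. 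Two small points should be made explicit. First, you implicitly assume $1\notin T_2$ when you write $S\cap G_1=T_1$ and $S\setminus G_1=G_1T_2$; if $1\in T_2$ then $G_1\subseteq S$ and the identification of $S\cap G_1$ with $T_1$ breaks. This is harmless, since deleting the identity from a connection set changes neither the automorphism group nor the DRR property, so one may always normalise $1\notin T_1\cup T_2$, but it deserves a sentence. Second, the degenerate case $T_2=\emptyset$ (which can occur for a DRR only when $|G_2|=2$) makes the projection step vacuous, though it causes no harm there because $\Aut(\C_2)=1$; again, a word of acknowledgement would make the proof airtight.
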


To apply Theorem~\ref{direct-prod}, we need to understand which groups of squarefree order admit DRRs. We use the following result of Babai.

\begin{theorem}\cite[Theorem 2.1]{Babai}\label{DRRs}
Every group admits a DRR except $\C_2^i$ for $2 \le i \le 4$, $\C_3^2$, and $\Q_8$. In particular, every group of squarefree order admits a DRR.
\end{theorem}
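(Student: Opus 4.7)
The main body of this theorem is a deep classification due to Babai, and my plan is to invoke it rather than reprove it. The only content that really needs its own argument here is the ``in particular'' clause, and that clause reduces to a one-line observation: I would list the orders of the five exceptional groups and note that none of them is squarefree.

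More explicitly, $\C_2^i$ for $2 \le i \le 4$ has order $4$, $8$, or $16$; $\C_3^2$ has order $9$; and $\Q_8$ has order $8$. Each of these integers is divisible by the square of a prime, so a group of squarefree order cannot be isomorphic to any group in the exceptional list, and Babai's theorem then immediately supplies a DRR. That is the entirety of the proof as it appears in this paper.

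If one did want to sketch the main content (Babai's theorem itself), the standard plan is a probabilistic construction: choose $S \subseteq G$ uniformly at random and apply a union bound, over candidate extra symmetries $\sigma \in \Sym(G) \setminus \hat{G}$, on the probability that $\sigma$ maps $S$ to itself setwise in a way that would yield an automorphism of $\Cay(G,S)$. For $|G|$ sufficiently large the union bound succeeds and almost every $S$ gives a DRR; the main obstacle, and the reason the exception list is nonempty, is to handle the remaining small groups individually. These direct verifications are precisely what throws up the five exceptions at orders $4$, $8$, $9$, and $16$, all non-squarefree, so they do not interfere with the corollary needed here.
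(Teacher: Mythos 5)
Your proposal matches the paper exactly: the result is simply cited from Babai, and the ``in particular'' clause is immediate because the exceptional groups have orders $4$, $8$, $9$, and $16$, none of which is squarefree. (One minor aside: Babai's original proof is an explicit combinatorial construction rather than the probabilistic/union-bound argument you sketch, but since that sketch is optional and the paper does not reprove the theorem, this does not affect the correctness of your argument.)
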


\begin{corollary}\label{cor:exceptional}
Let $R$ be a group of squarefree order. If $R$ is abelian of composite order or $R\cong \D_{2r}\times \C_q$ with $r\in\{3,5\}$ and $q$ prime, then $R$ is not DRR-detecting.
\end{corollary}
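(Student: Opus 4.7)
The plan is to derive both cases as immediate applications of Theorem~\ref{direct-prod}, using Theorem~\ref{DRRs} to supply the DRR hypotheses on the factors. Nothing new really needs to be proved; the work is just in verifying the coprimality and DRR conditions.

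First, suppose $R$ is abelian of squarefree composite order. Since $|R|$ is composite and squarefree, I can write $|R|=m_1m_2$ with $m_1,m_2>1$ coprime. By the structure of finite abelian groups, $R\cong G_1\times G_2$ where $|G_i|=m_i$ and $\gcd(|G_1|,|G_2|)=1$. Both $G_1$ and $G_2$ have squarefree order, so by Theorem~\ref{DRRs} each admits a DRR. Theorem~\ref{direct-prod} then yields that $R$ is not DRR-detecting.

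Second, suppose $R\cong \D_{2r}\times \C_q$ with $r\in\{3,5\}$ and $q$ prime. Since $|R|=2rq$ is squarefree, $q\notin\{2,r\}$, so $\gcd(2r,q)=1$. The factor $\D_{2r}$ (namely $\D_6$ or $\D_{10}$) has squarefree order, and $\C_q$ has prime order, so both are nontrivial groups of squarefree order; by Theorem~\ref{DRRs} each admits a DRR. Applying Theorem~\ref{direct-prod} again shows that $R$ is not DRR-detecting.

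There is no genuine obstacle: the only subtlety worth flagging is that being DRR-detecting is not about the \emph{existence} of DRRs on $R$, but about whether the condition $\Aut(R)_S=1$ is strong enough to guarantee one. So there is no conflict with Proposition~\ref{prop:sporadic}, which records that $\D_6$ and $\D_{10}$ themselves are DRR-detecting while nonetheless admitting DRRs.
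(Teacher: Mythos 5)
Your proof is correct and follows exactly the paper's argument: in both cases $R$ is written as a nontrivial direct product of two groups of coprime squarefree order, Theorem~\ref{DRRs} supplies a DRR on each factor, and Theorem~\ref{direct-prod} concludes. The paper's own proof is just a terser version of the same verification.
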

\begin{proof}
Note that $R$ is a nontrivial direct product of two groups of coprime squarefree order. The result then follows from Theorems~\ref{direct-prod} and~\ref{DRRs}. 
\end{proof}


To prove Theorem~\ref{theo:main}, it remains to show that $\C_q \times \D_6$ and $\C_q \times \D_{10}$ are GRR-detecting and to determine the status of nonabelian groups whose order is a product of two primes. This is our goal in the remainder of the paper.

\subsection{The case when $\Cay(R,S)$ is a generalised wreath product}

Since we are trying to show that some groups are DRR or GRR-detecting, we have to show that they do not admit witnesses. One case that needs to be handled is to show that even nontrivial generalised wreath products on these groups are not witnesses. This is the goal of this subsection.

\begin{lemma}\label{UsingConjugation}
If  $\Cay(R,S)$ is a nontrivial generalised wreath product with respect to $K$ and $H$ and $\Z(H)\cap K\nleq \Z(R)$, then $\Aut(R)_S > 1$.
\end{lemma}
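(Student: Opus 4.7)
The plan is to produce an explicit nontrivial element of $\Aut(R)_S$, namely the inner automorphism given by conjugation by a well-chosen element of $\Z(H)\cap K$. By hypothesis, we can fix some $k\in(\Z(H)\cap K)\setminus\Z(R)$. Let $\alpha\colon R\to R$ be the map $\alpha(r)=k^{-1}rk$. This is an inner automorphism of $R$, and it is nontrivial precisely because $k\notin \Z(R)$. So it only remains to show $\alpha(S)=S$.

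To do this, I would split $S$ into the two pieces $S\cap H$ and $S\setminus H$ and check each separately. For $s\in S\cap H$, we have $s\in H$ and $k\in \Z(H)$, so $k^{-1}sk=s$; thus $\alpha$ fixes $S\cap H$ pointwise. For $s\in S\setminus H$, I would use both halves of the generalised wreath product condition: since $k^{-1}\in K$, we get
\[
\alpha(s)=k^{-1}sk\in K(S\setminus H)\cdot K=(S\setminus H)\cdot K=S\setminus H,
\]
where the first equality uses $K(S\setminus H)=S\setminus H$ and the second uses $(S\setminus H)K=S\setminus H$. Hence $\alpha(S\setminus H)\subseteq S\setminus H$. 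Combining the two cases, $\alpha(S)\subseteq S$; since $\alpha$ is a bijection of the finite set $R$ and $S$ is finite, $\alpha(S)=S$. Therefore $1\neq \alpha\in\Aut(R)_S$, as required.

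There is essentially no hard step here: the lemma is a direct check, with the only subtlety being that both the ``left'' condition $K(S\setminus H)=S\setminus H$ and the ``right'' condition $(S\setminus H)K=S\setminus H$ from the definition of generalised wreath product are needed, since conjugation multiplies on both sides. The hypothesis $\Z(H)\cap K\not\le \Z(R)$ is exactly what guarantees the resulting inner automorphism is nontrivial.
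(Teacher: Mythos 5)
Your proof is correct and matches the paper's argument essentially verbatim: both take conjugation by an element $k\in(\Z(H)\cap K)\setminus\Z(R)$, use $k\in\Z(H)$ to fix $S\cap H$ pointwise, and use both halves of the condition $K(S\setminus H)=S\setminus H=(S\setminus H)K$ to show $S\setminus H$ is preserved. No issues.
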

\begin{proof}
By definition, $K(S\setminus H)=S \setminus H=(S\setminus H)K$. Let $k$ be an element of $(\Z(H)\cap K)\setminus \Z(R)$ and let $\alpha_k\in\Aut(R)$ denote conjugation by $k$. Since $k\in \Z(H)$, we have that $\alpha_k$ fixes $H$ pointwise. Moreover, since $k\in K$, for every $s\in S\setminus H$, we have $s^{\alpha_k}=k^{-1}sk\in KsK\subseteq S$. It follows that $\alpha_k\in \Aut(R)_S$. Finally, as $k\notin \Z(R)$, $\alpha_k\neq 1$, as required.
\end{proof}

From this we are able immediately to prove our desired result in the case where $|R|$ is a product of two primes.

\begin{corollary}\label{pq-wreath}
Let $R$ be a nonabelian group whose order is a product of two distinct primes and let $S\subseteq R$. If $\Cay(R,S)$ is a nontrivial generalised wreath product, then $\Aut(R)_S > 1$.
\end{corollary}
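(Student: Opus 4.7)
The plan is to observe that the hypothesis $|R|=pq$ forces a very restrictive shape on $K$ and $H$, and then to reduce immediately to Lemma~\ref{UsingConjugation}.

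First, since $\Cay(R,S)$ is a nontrivial generalised wreath product with respect to $K$ and $H$, we have $1<K\trianglelefteq H<R$. The only divisors of $|R|=pq$ are $1,p,q,pq$, so the conditions $1<|K|\leq|H|<|R|$ force $|H|$ to be a prime (either $p$ or $q$), and then $|K|=|H|$, giving $K=H$.

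Next, since $K=H$ has prime order, it is cyclic and hence abelian, so $\Z(H)=H$ and therefore $\Z(H)\cap K=K$. Since $R$ is nonabelian of order a product of two distinct primes, a standard argument (if $\Z(R)$ had prime index in $R$ then $R/\Z(R)$ would be cyclic, forcing $R$ abelian) shows $\Z(R)=1$. Because $K>1$, we conclude $\Z(H)\cap K=K\nleq\Z(R)$.

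Finally, applying Lemma~\ref{UsingConjugation} gives $\Aut(R)_S>1$, as required. There is no real obstacle here: the entire content is recognising that in order $pq$ the generalised wreath product collapses to a wreath product with $K$ of prime order, which automatically satisfies the hypothesis of Lemma~\ref{UsingConjugation}.
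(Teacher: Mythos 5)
Your proof is correct and follows essentially the same route as the paper's: in order $pq$ the chain $1<K\trianglelefteq H<R$ collapses to $K=H$ of prime order, and Lemma~\ref{UsingConjugation} applies since $\Z(H)=H\nleq\Z(R)=1$. You simply spell out the standard verification that $\Z(R)=1$, which the paper takes for granted.
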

\begin{proof}
Say that $\Cay(R,S)$ is a nontrivial generalised wreath product with respect to $K$ and $H$, so that  $1<K \trianglelefteq H<R$.  Given the order of $R$, we must have $H=K$ of prime order, with $\Z(H)=H\nleq \Z(R)=1$, and the result follows by Lemma~\ref{UsingConjugation}.
\end{proof}

It remains to deal with groups of the form $\C_q \times \D_{2r}$ where $r \in \{3,5\}$. We first need the following result, which is easy but we include a proof for completeness.

\begin{lemma}\label{dihedral-auts}
Let $r \in \{3,5\}$, let $D=\D_{2r}$ and let $S \subseteq D$ with $S=S^{-1}$. Then there exists some nontrivial $\beta \in \Aut(D)_S$ that inverts every element of the subgroup of order $r$ of $D$.
\end{lemma}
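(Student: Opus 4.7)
My plan is to explicitly parameterise the automorphisms of $D=\D_{2r}$ that invert every element of the cyclic subgroup of order $r$, and then show that at least one of them preserves $S$ setwise. Fix a generator $c$ of $C$ and an involution $t\in D\setminus C$, so that the involutions of $D$ are $t, ct,\ldots,c^{r-1}t$. Any automorphism $\beta$ with $\beta(c)=c^{-1}$ must send $t$ to an involution (distinct from $c$ unless $r=1$), hence is determined by a choice of $b\in\ZZ/r\ZZ$ with $\beta=\beta_b$ defined by $\beta_b(c)=c^{-1}$, $\beta_b(t)=c^b t$. A quick check of the defining relations $c^r=t^2=1$ and $tct=c^{-1}$ confirms each $\beta_b$ really does extend to an automorphism, and $\beta_b$ is nontrivial because it inverts $C$ and $r>1$.

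Since $S=S^{-1}$ and $\beta_b$ inverts $C$, it automatically preserves $S\cap C$ setwise. Writing $S\setminus C=\{c^i t: i\in I\}$ for some $I\subseteq \ZZ/r\ZZ$, one computes
\[
\beta_b(c^i t)=c^{-i}c^b t=c^{b-i}t,
\]
so $\beta_b(S)=S$ if and only if $b-I=I$ as subsets of $\ZZ/r\ZZ$. Since $r$ is odd, $2$ is invertible mod $r$, and this condition is equivalent to saying that $I$ is symmetric about the point $b/2$.

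The main content is therefore the elementary combinatorial claim that for $r\in\{3,5\}$, every subset $I\subseteq \ZZ/r\ZZ$ is symmetric about some point. For $r=3$ this is immediate from inspecting the eight subsets. For $r=5$, subsets of size $0,1,4,5$ are trivial, every two-element subset $\{a,a'\}$ is symmetric about its midpoint $(a+a')/2$, and three-element subsets reduce to the two-element case because $b-I=I$ is equivalent to $b-(\ZZ/r\ZZ\setminus I)=\ZZ/r\ZZ\setminus I$. Choosing a suitable $b$ in each case, $\beta:=\beta_b$ satisfies the conclusion. The only obstacle is this small finite check, which is routine.
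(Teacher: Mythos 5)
Your proposal is correct and follows essentially the same route as the paper: since $r$ is odd, your automorphisms $\beta_b$ (with $\beta_b(c)=c^{-1}$, $\beta_b(t)=c^bt$) are exactly the conjugations by reflections used in the paper, and your reduction via complementation to subsets of size at most $2$ of $\ZZ/r\ZZ$, with the midpoint $b=a+a'$ for two-element sets, matches the paper's choice $z=xy^{(i+j)/2}$. The only difference is presentational (abstract parameterisation versus inner automorphisms), so there is nothing substantive to add.
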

\begin{proof}
Let $C$ be the (cyclic) subgroup of order $r$ of $D$ and let $x\in D\setminus C$. We show that there exists an element $z\in xC$ such that conjugation by $z$ fixes $S$ setwise. The result then follows.

Clearly, conjugation by an element of $xC$ inverts every element of $C$ (and hence preserves $S\cap C$), so it suffices to show that $S\cap (D\setminus C)$ is preserved by conjugation by an element of $xC$. This is equivalent to preserving the complement $(D\setminus C)\setminus S$. Since $|D\setminus C|\leq 5$, we assume without loss of generality that $|S\cap (D\setminus C)|\leq 2$.

If $|S\cap (D\setminus C)|=0$, there is nothing to prove. If $|S\cap (D\setminus C)|=1$, then just take $z\in S\cap (D\setminus C)$. Finally, assume that $|S\cap (D\setminus C)|=2$, say $S\cap (D\setminus C)=\{ xy^i,xy^j\}$ where $C=\langle y\rangle$. Let $z=xy^{(i+j)/2}$ (where $(i+j)/2$ is computed in $\ZZ_r$.) One can check that conjugation by $z$ interchanges $xy^i$ and $xy^j$ hence preserves $S$, as required.
\end{proof}

\begin{proposition}\label{qDrWreath}
Let $r \in \{3,5\}$, let $q$ be an odd prime distinct from $r$,  let $R=\C_q \times \D_{2r}$ and let $S\subseteq R$ with $S=S^{-1}$. If $\Cay(R,S)$ is a nontrivial generalised wreath product with respect to $K$ and $H$, and $K$ has prime order, then $\Aut(R)_S > 1$. 
\end{proposition}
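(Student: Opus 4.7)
The plan is to split into cases according to $|K| \in \{2, q, r\}$, writing $R = C \times D$ with $C = \C_q$ and $D = \D_{2r}$, and letting $Y$ denote the unique subgroup of $D$ of order $r$; note that $\Z(R) = C$.

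Most cases I would dispose of using Lemma~\ref{UsingConjugation}. If $|K| = 2$, then $K$ is normal of order $2$ in $H$, so $K \leq \Z(H)$; and $K \not\leq \Z(R) = C$ since $K$ contains an involution while $C$ has odd order. If $|K| = r$, then $K$ must equal $Y$ (the unique subgroup of $R$ of order $r$), so again $K \not\leq \Z(R)$; moreover the proper subgroups of $R$ properly containing $Y$ are $\C_{qr} = C \times Y$ and $D$, and the first is abelian so $K \leq \Z(H)$. Thus Lemma~\ref{UsingConjugation} applies in every case except $|K| = q$ (where $K = C = \Z(R)$) and $(|K|,H) = (r, D)$ (where $\Z(D) = 1$).

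For the remaining cases I would construct $\alpha = (\alpha_1, \beta) \in \Aut(C) \times \Aut(D) = \Aut(R)$ that fixes $S$ setwise. Parametrise $S$ via the slices $A_c := \{d \in D : (c,d) \in S\}$ and $S_d := \{c \in C : (c,d) \in S\}$. When $|K| = r$ and $H = D$, the wreath condition forces, for each $c \neq 1$, that $A_c$ is a union of cosets of $Y$ in $D$ (so $A_c \in \{\emptyset, Y, D \setminus Y, D\}$), each of which is fixed by every $\beta \in \Aut(D)$ because $Y$ is characteristic in $D$. Lemma~\ref{dihedral-auts} applied to the symmetric set $A_1 \subseteq D$ then yields a nontrivial $\beta \in \Aut(D)$ fixing $A_1$ setwise, and $\alpha = (1_C, \beta)$ works.

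When $|K| = q$, the possibilities for $H$ are $C$, $C \times \langle x \rangle$ for some involution $x \in D$, and $C \times Y$. In the first two, $H \cap D \subseteq \{1,x\}$ consists of self-inverse elements, so combining the wreath condition (which forces $S_d \in \{\emptyset, C\}$ for $d \in D \setminus (H \cap D)$) with $S = S^{-1}$ makes every $S_d$ symmetric in $C$; hence $\alpha = (\iota, 1_D)$, with $\iota$ inversion on $C$, fixes $S$. The subtler sub-case is $H = C \times Y$: here the slices $S_d$ for $d \in Y \setminus \{1\}$ need not be symmetric, but $T := \{d \in D \setminus Y : S_d = C\}$ is a symmetric subset of $D$. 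I would apply Lemma~\ref{dihedral-auts} to $T$ to obtain $\beta \in \Aut(D)$ that both inverts $Y$ and fixes $T$ setwise, and then set $\alpha = (\iota, \beta)$. The main check, where I expect the most care is needed, is that on $S \cap H = S \cap (C \times Y)$ the map $\alpha$ restricts to inversion on the cyclic group $C \times Y$ (since both $\iota$ and $\beta|_Y$ are inversions), hence preserves $S \cap H$ because $S = S^{-1}$ and $H$ is closed under inversion; the $T$-invariance of $\beta$ simultaneously handles $S \setminus H$. In every case the constructed $\alpha$ is nontrivial (either $\iota \neq 1$ since $q \geq 3$, or $\beta \neq 1$ when $|K| = r$), yielding $\Aut(R)_S > 1$.
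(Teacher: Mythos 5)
Your proof is correct and follows essentially the same route as the paper: a case analysis on the pair $(K,H)$, dispatching most cases via Lemma~\ref{UsingConjugation} and handling the two remaining ones ($K=C_q$ with $H=C_q\times Y$, and $K=Y$ with $H=D$) by building an explicit automorphism from inversion on $\C_q$ together with Lemma~\ref{dihedral-auts}. The only differences are organisational (the paper first reduces to $H$ maximal among proper subgroups normalising $K$, and in the case $K=\C_q$, $H=\C_q\times Y$ it phrases your slice/$T$ argument via the projection $R\to R/K\cong\D_{2r}$), so there is nothing substantive to change.
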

\begin{proof}
Write $R=\langle z \rangle \times (\langle y\rangle \rtimes \langle x \rangle)$, with $|z|=q$, $|y|=r$ and $|x|=2$. Note that $\Z(R)=\langle z\rangle$. Up to conjugacy, we can assume that $K$ is generated by one of $x$, $y$ or $z$. As for $H$, we can assume that it is maximal in $R$ with respect to being proper in $R$ and normalising $K$. Indeed, if $\Cay(R,S)$ is a nontrivial generalised wreath product with respect to $K$ and $H'$, with $H'\leq H$, then $S\setminus H'=K(S\setminus H')=(S\setminus H')K$ so $S\setminus H=K(S\setminus H)=(S\setminus H)K$ and $\Cay(R,S)$ is also a nontrivial generalised wreath product with respect to $K$ and $H$. We thus only have to consider the following cases.
\begin{enumerate}
\item $H=\langle x,y\rangle\cong \D_{2r}$ and $K=\langle y\rangle\cong \C_r$. By Lemma~\ref{dihedral-auts}, there exists some nontrivial $\beta\in\Aut(H)_{S\cap H}$ that acts by inversion on $K$. In particular $K^\beta=K$. Let $\alpha$ be the unique automorphism of $R$ that fixes $z$ and agrees with $\beta$ on $H$. Note that $\alpha$ preserves both $K$ and $H$ so $\alpha$ preserves the two $K$-cosets in $H$. If $s \in S\cap H$ then $s^\alpha=s^\beta \in S\cap H$. If $s \in S \setminus H$, say $s\in z^ihK$, for some $h\in H$, then  $s^\alpha\in z^i(hK)^\alpha=z^ihK\subseteq S\setminus H$, so $\alpha \in \Aut(R)_S$, as required.

\item $H=\langle y,z\rangle\cong \C_{qr}$ and $K=\langle y\rangle\cong \C_r$. In this case,  $\Z(H)\cap K=K\nleq \Z(R)$ and the result follows by Lemma~\ref{UsingConjugation}.

\item $H=\langle y,z\rangle\cong \C_{qr}$ and $K=\langle z\rangle\cong \C_q$. 
Let $\pi:R\to R/K$ be the canonical projection mapping. Note that $\pi(R)\cong \D_{2r}$ and $\pi(S)=\pi(S)^{-1}$ so, by Lemma~\ref{dihedral-auts},  there exists some nontrivial $\beta\in\Aut(\pi(R))_{\pi(S)}$ that inverts every element of $\pi(\langle y\rangle)$. Let $\alpha$ be the unique automorphism of $R$ that inverts $z$ and such that $\pi\beta=\alpha\pi$. (In other words, $s^\alpha K=(sK)^\beta$ for every $s\in R$.) Since $H$ is characteristic in $R$, we have $H^\alpha=H$. As $\beta$ inverts $\pi(\langle y\rangle)$, we have $y^\alpha K=(yK)^\beta=y^{-1}K$. Moreover,  $\langle y\rangle$ is characteristic in $R$ hence $y^\alpha\in \langle y\rangle\cap y^{-1}K$ and $y^\alpha=y^{-1}$. It follows that $\alpha$ acts by inversion on $H$. Since $S=S^{-1}$, $\alpha$ preserves $S\cap H$. Now, if $s\in S\setminus H$, then we have $sK\in \pi(S)$ and since $\beta$ preserves $\pi(S)$, it follows that $s^\alpha K=(sK)^\beta \in \pi(S)$. As $K(S\setminus H)=S\setminus H$, we have that $s^\alpha K\in S\setminus H$. This shows that $\alpha\in\Aut(R)_S$.

\item  $H=\langle x,z\rangle\cong \C_{2q}$ and $K=\langle z\rangle\cong \C_q$. Let $\alpha: R\to R$ be defined by $(x^iy^jz^k)^\alpha=x^iy^{j}z^{-k}$. Note that $\alpha\in\Aut(R)$. Moreover, $\alpha$ acts by inversion on $H$. Since $S=S^{-1}$, $S \cap H$ is fixed by $\alpha$. If $s \in S \setminus H$, say $s=x^iy^jz^k$, then  $s^\alpha=x^iy^jz^{-k}\in sK\subseteq S\setminus H$ hence $S \setminus H$ is also fixed by $\alpha$ and $\alpha\in\Aut(R)_S$, as required.

\item $H=\langle x,z\rangle\cong \C_{2q}$ and $K=\langle x\rangle\cong \C_2$. In this case,  $\Z(H)\cap K=K\nleq \Z(R)$ and the result follows by Lemma~\ref{UsingConjugation}.
\end{enumerate}
\end{proof}

\subsection{Main results}

We are at last ready to show that groups of the form $\C_q \times \D_{2r}$ are GRR-detecting  and to characterise DRR-detection and GRR-detection for nonabelian groups whose order is a product of two primes. We first prove the following well known result:

\begin{lemma}\label{OpAffinePrimitive}
If $G$ is a primitive group of affine type with socle an elementary abelian $p$-group, then a point-stabiliser has no non-trivial normal $p$-subgroup.
\end{lemma}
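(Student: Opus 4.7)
The plan is to exploit the standard structure of an affine primitive group: $G = V \rtimes G_0$ where $V$ is the socle (viewed as an $\FF_p$-vector space), $G_0$ is a point-stabiliser, $G_0 \le \mathrm{GL}(V)$ acts faithfully on $V$, and primitivity forces $G_0$ to act irreducibly on $V$. Given a normal $p$-subgroup $P$ of $G_0$, I want to show $P$ acts trivially on $V$, and then use faithfulness to conclude $P=1$.

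The key step is to consider the fixed point subspace $C_V(P) = \{v \in V : v^h = v \text{ for all } h \in P\}$. First I would note $C_V(P) \ne 0$: this is the classical observation that a $p$-group acting on a nontrivial $p$-group has a nontrivial fixed point (an orbit-counting argument modulo $p$). Second, $C_V(P)$ is $G_0$-invariant: if $v \in C_V(P)$ and $g \in G_0$, then for any $h \in P$, normality gives $g^{-1}hg \in P$, so $(v^g)^h = v^{g(g^{-1}hg)} \cdot$... more cleanly, $(v^g)^h = (v^{ghg^{-1}})^g = v^g$.

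Now I invoke irreducibility of $G_0$ on $V$: the nonzero $G_0$-invariant subspace $C_V(P)$ must equal $V$. Hence $P$ centralises $V$. Since $G_0 \le \mathrm{GL}(V)$ acts faithfully on $V$, this forces $P = 1$.

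The only thing that needs a little care in the writeup is justifying the two structural ingredients (irreducibility and faithfulness of $G_0$ on $V$) from the definition of primitive affine type, but both are completely standard and I would just cite them. There is no real obstacle here; the argument is a short and classical application of the $p$-group fixed-point lemma combined with the irreducibility built into affine primitivity.
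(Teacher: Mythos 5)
Your proposal is correct and is essentially the paper's own argument: both rest on the fixed-point subgroup $\Cent_V(P)$ being non-trivial (the $p$-group fixed-point lemma) and $G_0$-invariant (by normality of $P$), then derive a contradiction from primitivity and the faithfulness of the point-stabiliser's action on the socle. The only cosmetic difference is that you invoke irreducibility of $G_0$ on $V$ while the paper argues directly that the orbits of a proper non-trivial invariant subgroup would form a non-trivial block system.
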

\begin{proof}
Let $V$ be the socle of $G$ and, to arrive at a contradiction let $T$ be a non-trivial normal $p$-subgroup of $G_x$. Since $T$ is normal in $G_x$, $\Cent_G(T)$ is normalised by $G_x$. It follows that $Z=\Cent_V(T)$ is also normalised by $G_x$. Now,  $V$ and $T$ are both $p$-groups, so $1<Z$ but since $V$ is a regular subgroup of the permutation group $G$, $\Cent_{G_x}(V)=1\neq T$ hence $Z<V$. It follows that the orbits of $Z$ form a non-trivial system of imprimitivity for $G$, contradicting its primitivity.
\end{proof}

The rest of the proof is split into two: Theorem~\ref{theo:SpecialCases} which essentially reduces the problem to the almost simple case, and Corollary~\ref{cor:special} which handles that case.

\begin{theorem}\label{theo:SpecialCases}
Let $q,r$ be distinct primes and either 
\begin{itemize}
\item $r\in \{3,5\}$, $q$ is odd and let $R\cong \C_q\times \D_{2r}$, or
\item   $R$ is a nonabelian group isomorphic to $\C_q\rtimes \C_r$.
\end{itemize}
Let $S\subseteq R$, suppose that $S=S^{-1}$ when $R\cong \C_q\times \D_{2r}$ and let $G$ satisfy $\hat{R}<G\leq\Aut(\Cay(R,S))$. If $\hat{R}$ is maximal in $G$, then one of the following occurs:
\begin{enumerate}
\item $\Aut(R)_S>1$,
\item $\hat{R}$ is core-free in $G$ and $G$ is almost simple, or 
\item $R\cong \C_q\times \D_{2r}$, $\C_q$ is the core of $\hat{R}$ in $G$, and $G/\C_q$ is almost simple.
\end{enumerate}
\end{theorem}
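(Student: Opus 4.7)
The plan is to assume $\Aut(R)_S = 1$ (otherwise case (1) holds) and show that case (2) or case (3) must occur. By Theorem~\ref{aut-R-S} the assumption gives $\N_G(\hat R) = \hat R$, so $\hat R$ is not normal in $G$ (as $\hat R < G$), and $N := \mathrm{Core}_G(\hat R)$ is a proper subgroup of $\hat R$. Since $N$ is normal in $R$, its possibilities are short: $\{1, \hat{\C_q}\}$ when $R \cong \C_q \rtimes \C_r$, and $\{1, \hat{\C_q}, \hat{\C_r}, \hat{\C_{qr}}, \hat{\D_{2r}}\}$ when $R \cong \C_q \times \D_{2r}$. I then eliminate every value of $N$ outside cases (2) and (3). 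For each such excluded $N$, I set $K := \hat N$ together with a natural proper overgroup $H$ (either $\N_R(K)$ or the smallest characteristic subgroup containing $K$) and verify the hypotheses of Lemma~\ref{GenWreathCosets}: the inclusion $H \le \N_R(G_1)$ and the coset containments $G_1 K, G_1 K^r \subseteq G_1 G_1^r$ follow from the normality of $\hat N$ in $G$ combined with $\N_G(\hat R) = \hat R$. This exhibits $\Cay(R,S)$ as a non-trivial generalised wreath product, and since $\Z(R)$ is trivial in the $\C_q \rtimes \C_r$ case and equal to $\hat{\C_q}$ in the $\C_q \times \D_{2r}$ case, $\Z(H) \cap K$ contains an element outside $\Z(R)$ in every excluded case. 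Lemma~\ref{UsingConjugation} then yields $\Aut(R)_S > 1$, a contradiction. This also explains why $N = \hat{\C_q}$ survives precisely when $R \cong \C_q \times \D_{2r}$: there $K = \Z(R)$ and the centre condition fails, producing case (3).

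Having restricted $N$, I pass to $\bar G := G/N$ and $\bar R := \hat R / N$; maximality and core-freeness of $\bar R$ in $\bar G$ persist. Let $\bar M$ be a minimal normal subgroup of $\bar G$. Core-freeness gives $\bar M \not\le \bar R$, and maximality gives $\bar M \bar R = \bar G$. If $\bar M$ is abelian, then since $\bar M$ is abelian the intersection $\bar M \cap \bar R$ is normal in $\bar M$ and in $\bar R$, hence in $\bar G$, so $\bar M \cap \bar R = 1$ by core-freeness; thus $\bar G = \bar M \rtimes \bar R$ and $|\bar M|$ is a $p$-power. Because $|\bar R|$ has several distinct prime factors, $|\bar R| \nmid |\bar M|$, so $\bar M$ cannot act transitively on $R/N$, and its non-trivial orbit partition forms a block system of $\bar G$ that lifts to $R$. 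I then apply Lemma~\ref{GenWreathCosets} to this lifted block structure (using that the point-stabiliser of $\bar G$ in the block action is a $p$-group and invoking Lemma~\ref{OpAffinePrimitive} to rule out the primitive case), obtaining a non-trivial generalised wreath decomposition of $\Cay(R,S)$; Lemma~\ref{UsingConjugation} again produces $\Aut(R)_S > 1$, contradicting the assumption. Hence $\bar M = T^k$ with $T$ a non-abelian simple group.

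Finally, conjugation of $\bar R$ on the $k$ simple factors of $\bar M$ yields a homomorphism $\bar R \to \Sym(k)$, and the small order of $\bar R$ (at most $2qr$) together with the constraint $|T|^k \le |\bar G|$ forces $k = 1$. Then $\Cent_{\bar G}(T)$ is a normal subgroup of $\bar G$ disjoint from $T$; a repeat of the coreality/wreath analysis shows that it intersects $\bar R$ trivially and then must be trivial (else it furnishes another minimal normal subgroup falling into the abelian or $T^{k'}$ analysis just completed), so $\bar G \hookrightarrow \Aut(T)$ is almost simple, giving case (2) or case (3) according to whether $N = 1$ or $N = \hat{\C_q}$. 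The main obstacle is the abelian-socle subcase: primitivity of the $\bar G$-action on $R/N$ is not automatic from the maximality of $\bar R$, so the imprimitive sub-case has to be handled by lifting a block system back to $R$ and re-running the wreath-product machinery of Section~\ref{sec:WreathProducts}; verifying the coset conditions of Lemma~\ref{GenWreathCosets} uniformly in all these settings is the most delicate bookkeeping in the proof.
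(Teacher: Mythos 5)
Your overall architecture (reduce mod the core $N$, split on the type of a minimal normal subgroup of $\overline{G}=G/N$, use generalised wreath products to manufacture automorphisms in the non-almost-simple cases) matches the paper's, but there are genuine gaps at the two places where all the real work happens. The first is your opening step: you claim that for each ``excluded'' core $N$ the containments $G_1K,G_1K^r\subseteq G_1G_1^r$ required by Lemma~\ref{GenWreathCosets} ``follow from the normality of $\hat N$ in $G$ combined with $\N_G(\hat R)=\hat R$.'' They do not. A non-trivial normal subgroup of $G$ contained in $\hat R$ gives you a block system (its orbits), which is far weaker than a generalised wreath decomposition; nothing forces $N\subseteq G_1G_1^r$. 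The paper never eliminates cores this way: the cores with abelian quotient $\overline{R}$ (e.g.\ $N\cong\C_{qr}$ or $\C_r$ in the $\C_q\times\D_{2r}$ case) are excluded only \emph{after} the quotient is known to be primitive of almost simple or affine type --- in the almost simple case by the fact that such a group has no abelian point-stabiliser (the citation of \cite{CayleyAbelian}), and in the affine case by Lemma~\ref{OpAffinePrimitive} forcing $p\mid|\overline{R}|$ with no normal Sylow $p$-subgroup. Without one of these inputs your case (2)/(3) dichotomy is not established.

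The second gap is the affine case, which is the bulk of the paper's proof and which you compress to ``apply Lemma~\ref{GenWreathCosets} to the lifted block structure.'' The coset conditions of that lemma are exactly what must be \emph{proved}, and the paper does so by identifying specific structure: a Sylow $p$-subgroup $X$ of $E$ that is normal in $G$ and contains $G_1$ with index $p$ (so $G_1$ is maximal normal in $X$ and $G_1G_1^r=X$), the computation $\N_G(E_1)=\N_G(G_1)=EG_1$ via the order formula ($\star$), Maschke's theorem to pin down $\N_G(E_1)$ when $R\cong\C_q\times\D_{2r}$, and --- in the case $N\cong\D_{2r}$ --- an entirely different argument that constructs an explicit automorphism $m^jh^ik^\epsilon\mapsto m^{-j}h^ik^\epsilon$ and verifies $S^\alpha=S$ by a double-coset computation; no wreath product appears there at all. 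Relatedly, your plan to finish every such case with Lemma~\ref{UsingConjugation} fails whenever $\Z(H)\cap K\leq\Z(R)$, e.g.\ when $K=\C_q=\Z(R)$ in $\C_q\times\D_{2r}$; the paper instead routes these through Corollary~\ref{pq-wreath} and Proposition~\ref{qDrWreath}, the latter of which needs bespoke automorphisms precisely because conjugation does not suffice. Finally, in the non-abelian socle case your deduction that $k=1$ from ``$|T|^k\le|\overline{G}|$'' is vacuous (that inequality always holds); ruling out product action/twisted wreath configurations requires an actual argument, which the paper outsources to the classification of primitive groups with soluble point-stabiliser \cite{LiZhang} together with the squarefree-order constraint.
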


\begin{proof}
For $X\leq \Aut(\Cay(R,S))$, let $X_1$ denote the stabiliser in $X$ of the vertex of $\Cay(R,S)$ corresponding to the identity of $R$. For simplicity, we will identify $\hat{R}$ with $R$ from now on. Note that $G_1$ is non-trivial and core-free in $G$ and $G=RG_1$ with $R\cap G_1=1$. Let $N$ be the core of $R$ in $G$. If $R$ is normal in $G$, then $1<G_1\leq \Aut(R)_S$, hence we  assume that $R$  is not normal in $G$ and $N<R$. Let $\overline{G}=G/N$, $\overline{R}=R/N$ and $\overline{G_1}=G_1N/N\cong G_1$.  Note that $\overline{R}$ is a maximal core-free subgroup of $\overline{G}$, so we can view $\overline{G}$ as a primitive group with point-stabiliser $\overline{R}$ and a regular subgroup $\overline{G_1}$. Since the point-stabiliser $\overline{R}$ is soluble, the primitive type of $\overline{G}$ is either  affine, almost simple, or product action. Moreover, because the order of the point-stabiliser $\overline{R}$ is squarefree, the product action case can't occur. (See for example \cite[Theorem 1.1]{LiZhang} for both of these claims.)

Suppose first that $\overline{G}$ is almost simple. In this case, the point-stabiliser $\overline{R}$ cannot be abelian (see for example \cite[Lemma 2.1]{CayleyAbelian}), so either $N=1$ (and $G\cong\overline{G}$, $R\cong\overline{R}$, and conclusion (2) holds, completing the proof) or $N\cong \C_q$ and $\overline{R}\in\{\D_6,\D_{10}\}$. In the latter case, conclusion (3) holds, again completing the proof. 

From now on, we assume that $\overline{G}$ is of primitive affine type. In this case, there exists a normal subgroup $E$ of $G$ such that $N\leq E$, $\overline{G}=\overline{E}\rtimes \overline{R}$ and $\overline{E}\cong \C_p^x$ for some prime $p$. It follows that $|G_1|=|\overline{G_1}|=p^x$. Note that $\overline{R}$ acts faithfully and irreducibly on $\overline{E}$. We now prove the following claim.

\medskip

{\noindent\textbf{Claim:} If $p$ divides $|N|$ and  $R$ has a normal Sylow $p$-subgroup, then $\Aut(R)_S>1$.}

\medskip
Let $H=\N_R(G_1)$, let $K$ be a Sylow $p$-subgroup of $R$ and let $X$ be a Sylow $p$-subgroup of $E$. Since $p$ divides $|N|$, it does not divide $|\overline{R}|$, hence $\overline{E}$ is a normal Sylow $p$-subgroup of $\overline{G}$. Note that $K$ is characteristic in $N$ thus normal in $G$. It follows that $K\leq X$, $X/K=\overline{E}$ and $X$ is normal in $G$. Moreover, $|X:G_1|=p$, hence $G_1$ is normal and maximal in $X$ hence $K\leq H$. Let $r\in R\setminus H$. By definition, we have $G_1^r\neq G_1$ but $G_1$ is contained in $X$ which is normal in $G$, so $G_1^r\leq X$. Since $G_1$ is a normal maximum subgroup of $X$, $G_1G_1^r=X$. It follows that $G_1K,G_1K^r\subseteq X=G_1G_1^r$, and we can apply Lemma~\ref{GenWreathCosets} to conclude that $\Cay(R,S)$ is a nontrivial generalised wreath product with respect to $K$ and $H$. If $R \cong \C_q\rtimes \C_r$ then the claim follows by Corollary~\ref{pq-wreath}, whereas if $R \cong \C_q \times \D_{2r}$, it follows by Proposition~\ref{qDrWreath}.
\qedsymbol

\medskip

 We split the remainder of the proof into two cases, according to whether $N$ is  cyclic or dihedral.

\medskip

{\noindent\textbf{$N$ is cyclic:}}
Suppose that $p$ divides $|N|$. Since $N$ is cyclic, its Sylow $p$-subgroup is characteristic, therefore normal in $R$. We can thus apply our claim to conclude that $\Aut(R)_S>1$, completing the proof. From now on, we assume that $p$ does not divide $|N|$. Let $C$ be the centraliser of $N$ in $G$. Since $N$ is cyclic, we have $N\leq C$. If $N=C$, then $G/N$ embeds in $\Aut(N)$ which is abelian, a contradiction since $G/N$ is nonabelian. We conclude that $N<C$. Since $\overline{E}$ is the unique minimal normal subgroup of $\overline{G}$, we have $E\leq C$.

If $p$ is coprime to $|\overline{R}|$, then $\overline{E}$ is the unique Sylow $p$-subgroup of $\overline{G}$ so $\overline{G_1}=\overline{E}$ and $G_1\leq E\leq C$ which implies $E=NG_1=N\times G_1$. Since $p$ does not divide $|N|$,  $G_1$ is characteristic in $E$, and thus normal in $G$, a contradiction. This shows that $p$ divides $|\overline{R}|$. Recall that $\overline{R}$ has no non-trivial normal $p$-subgroup (Lemma~\ref{OpAffinePrimitive}), so we get the following cases:

\begin{enumerate}
\item $R\cong \C_q\rtimes \C_r$, $N=1$, $\overline{R}\cong \C_q\rtimes \C_r$ and $p=r$.\label{case1}
\item $R\cong \C_q\times \D_{2r}$, $N=1$, $\overline{R}\cong \C_q\times \D_{2r}$ and $p=2$. \label{case2}
\item $R\cong \C_q\times \D_{2r}$, $N\cong \C_q$, $\overline{R}\cong \D_{2r}$ and $p=2$. \label{case3}
\end{enumerate}

In case (\ref{case1}), we have $G=E\rtimes R\cong \C_r^x\rtimes (\C_q\rtimes \C_r)$. Since $\C_q\rtimes \C_r$ is nonabelian and acts faithfully on $\C_r^x$, we have $x\geq 2$ and $E_1\neq 1$. Since $G_1$ is not normal in $G$, we have $G_1\neq E$, hence $|EG_1:G_1|=|EG_1:E|=r$. It follows that both  $E_1$ and $G_1$ are normal in $EG_1$. Note that $EG_1$ is a maximal subgroup of $G$ and neither $E_1$ nor $G_1$ is normal in $G$ (since $G_1$ is core-free in $G$), so $\N_G(E_1)=\N_G(G_1)= EG_1$. Note that $REG_1=G$, hence 
\begin{equation}\label{eq}\tag{$\star$}
|G|=\frac{|R||EG_1|}{|R\cap EG_1|}=\frac{|R||E||G_1|}{|R\cap EG_1||E\cap G_1|}
\end{equation}
 and $|R\cap EG_1|=|G_1:E\cap G_1|=r$. Let $K=R\cap EG_1=\N_R(E_1)=\N_R(G_1)\cong \C_r$. By definition, $EK\leq EG_1$ hence $EK=EG_1$ by order considerations.

We show that, for every $s\in R\setminus K$, we have $G_1K,G_1K^s\subseteq G_1G_1^s$.  Since $K=\N_R(E_1)$, we have  $E_1^s\neq E_1$. Since $E_1$ is maximal in $E$ which is normal in $G$, it follows that $E=E_1E_1^s\subseteq G_1G_1^s$, so $G_1G_1^s=EG_1G_1^s$. Now, $G_1K\subseteq EG_1 \subseteq EG_1G_1^s=G_1G_1^s$. On the other hand, since $EK=EG_1$, we have $EG_1^s=EK^s$ and thus  $G_1K^s\subseteq EG_1K^s=EKK^s=EKEK^s=EG_1EG_1^s=G_1G_1^s$, as required. It follows by Lemma~\ref{GenWreathCosets} that $\Cay(R,S)$ is a nontrivial wreath product with respect to $K$ and the claim follows by Corollary~\ref{pq-wreath}.

In case (\ref{case2}), we have $G=E\rtimes R\cong \C_2^x\rtimes (\C_q\times \D_{2r})$. We consider faithful irreducible representations of $\C_q\times \D_{2r}$ over $\FF_2$. Since $\C_q\times \D_{2r}$ is a direct product, its representations arise as tensor products of the ones for $\C_q$ and $\D_{2r}$. Note that the faithful irreducible representations of the factors have dimension at least $2$.

Since $G_1$ is not normal in $G$,  we have  $E\neq G_1$ hence $|EG_1:E|=|EG_1:G_1|=2$ and, in particular, $EG_1\leq \N_G(E_1)$. Now, suppose $EG_1< \N_G(E_1)$, so an element of $R$ of order $q$ or $r$ normalises $E_1\cong \C_2^{x-1}$. By Maschke's Theorem, it must also normalise some $\C_2\leq E$, but this contradicts the dimensions of the faithful irreducible representations in  the previous paragraph. We conclude that $\N_G(E_1)=EG_1$. A calculation similar to (\ref{eq}) yields that $|R\cap EG_1|=|G_1:E\cap G_1|=2$. Let $K=R\cap EG_1=\N_R(E_1)\leq \N_R(G_1)$. Let $r\in R\setminus K$, so $E_1^r\neq E_1$. Since $E_1$ is normal and maximal in $E$, which itself is normal in $G$, it follows that $E_1E_1^r=E$. It follows that $G_1K\subseteq  EG_1 \subseteq G_1G_1^r$ and, by Lemma~\ref{GenWreathCosets}, $\Cay(R,S)$ is a nontrivial wreath product with respect to $K$. Note that $K\cong \C_2$, so Proposition~\ref{qDrWreath} completes the proof.

In case (\ref{case3}), $R\leq C$, so $G=ER\leq C$ and $N$ is central in $G$ hence $G\cong \C_q \times (\C_2^x\rtimes \D_{2r})$. Let $X\cong \C_2^x$ be the Sylow $2$-subgroup of $E$. Note that $X$ is normal in $G$. Since $\D_{2r}$ is nonabelian and acts faithfully on $X$, we have $x\geq 2$ and $X_1\neq 1$. Since $G_1$ is not normal in $G$,  we have  $G_1\neq X$, hence $|XG_1:G_1|=|XG_1:X|=2$. It follows that $X_1$ and $G_1$ are both normal in $XG_1$. Clearly, $\C_q$ also normalises $X_1$ and $G_1$, so $\N_G(X_1)$ and $\N_G(G_1)$ both contain $\C_q\times (X\rtimes \C_2)$ which is a maximal subgroup of $G$. Since neither $X_1$ nor $G_1$ is normal in $G$ (since $G_1$ is core-free in $G$), we have $\N_G(X_1)=\N_G(G_1)= \C_q\times (X\rtimes \C_2)$. Let $K=R\cap XG_1$ and $H=\N_R(X_1)=\N_R(G_1)$. We have $K\leq H$ and a calculation similar to (\ref{eq}) gives $|K|=2$ and $|H|=2q$ hence $K\cong \C_2$ and $H\cong \C_q\times \C_2$. Let $r\in R\setminus H$, so $X_1^r\neq X_1$. Since $X_1$ is normal and maximal in $X$, which itself is normal in $G$, it follows that $X_1X_1^r=X$. This implies that $G_1K\subseteq  XG_1 \subseteq G_1G_1^r$ and, by Lemma~\ref{GenWreathCosets}, $\Cay(R,S)$ is a nontrivial generalised wreath product with respect to $K$ and $H$, so Proposition~\ref{qDrWreath} completes the proof.

\medskip

{\noindent\textbf{$N$ is isomorphic to $\D_{2r}$:}}
In this case, $\overline{R}\cong\C_q$ and $\overline{G}\cong \C_p^x\rtimes \C_q$. Let $C$ be the centraliser of $N$ in $G$.  Note that $C\cap N=1$, so $CN=C\times N$. Suppose first that $G>CN$. Conjugation induces a natural map from $G$ to $\Aut(N)$. There is also a natural map from $\Aut(N)$ to $\Out(N)$. The kernel of the composition of these two maps is $CN$, so by the first isomorphism theorem, $G/CN$ embeds in $\Out(N)$ and $\Out(N)>1$. The only possibility is $r=5$ and $|G:CN|=2$, but this implies that $\overline{G}$ must have a subgroup of index $2$ (namely $CN/N$), which implies $q=2$, a contradiction.  We can thus assume that  $G=C\times N$ hence $\overline{G}\cong C$ and $G\cong (\C_p^x\rtimes \C_q)\times \D_{2r}$, with $p\neq q$. Since $|G_1|=p^x$ and $G_1$ is not normal in $G$, we must have $p\in\{2,r\}$. By the claim proved earlier, we can assume $p=2$ and $G\cong (\C_2^x\rtimes \C_q)\times \D_{2r}$. Let $X\cong \C_2^x$ be the Sylow $2$-subgroup of $C$. Note that $X$ is normal in $G$. Moreover, since $\C_q$ acts faithfully on $X$ and $q\geq 3$, we have $x\geq 2$. Since $G_1$ is not normal, $G_1\neq X$ hence $|XG_1:X|=|XG_1:G_1|=2$ and $X_1\neq 1$. The same calculation as in (\ref{eq}) gives $|R\cap XG_1|=2$. Write $\langle k\rangle=R\cap XG_1$. Note that $XG_1$ is elementary abelian and $XG_1=G_1\times \langle k\rangle=X\times \langle k\rangle$. Write $X=X_1\times \langle y\rangle$. Since $XG_1/X_1$ is a Klein group, there are three subgroups strictly between $X_1$ and $XG_1$, namely $X$, $G_1$ and $X_1\times \langle k\rangle$.  As $X_1\times \langle yk\rangle$ is one of these three subgroups, by elimination, we must have $X_1\times \langle yk\rangle=G_1$. Let $h$ and $m$ be elements of order $r$ and $q$ in $R$, respectively. Note that  $m$ is central in $R$ while $h^k=h^{-1}$ hence $k^h=kh^2$. Note also that $k$ and $h$ commute with $X$. We have
\begin{equation*}
(G_1)^{h^{-i}}=(X_1\langle yk\rangle)^{h^{-i}}=X_1\langle yk\rangle^{h^{-i}}=X_1\langle ykh^{-2i}\rangle.
\end{equation*}


Let $\alpha:R\to R$ be given by $(m^jh^ik^\epsilon)^\alpha=m^{-j}h^ik^\epsilon$. Note that $\alpha\in\Aut(R)$. Moreover, $\alpha\neq 1$ since $q\geq 3$.  We show that $S^\alpha=S$. Note that $\alpha$ fixes $\langle h,k\rangle$ pointwise. Let $s\in S\setminus \langle h,k\rangle$, say $s=m^jh^ik^\epsilon$, with $m^j\neq 1$.  If $\epsilon=1$, then $s^{\alpha}=m^{-j}h^ik=s^{-1}\in S^{-1}=S$. We now assume that $\epsilon=0$ so $s=m^jh^i$. Since $\langle m\rangle\cong \C_q$ acts irreducibly on $X$ and $X_1\neq 1$, we have $(X_1)^{m^j}\neq X_1$, which implies that $X_1(X_1)^{m^j}=X$ and thus $G_1(X_1)^{m^j}=G_1X_1(X_1)^{m^j}=G_1X$. Since $m^j\neq 1$, we have

\begin{align*}
G_1sG_1&=G_1m^jh^i G_1\\
&=G_1(G_1)^{h^{-i}m^{-j}}m^jh^i\\
&=G_1(X_1\langle ykh^{-2i}\rangle)^{m^{-j}}m^jh^i\\
&=G_1(X_1)^{m^{-j}}\langle ykh^{-2i}\rangle^{m^{-j}}m^jh^i\\
&=G_1X\langle ykh^{-2i}\rangle^{m^{-j}}m^jh^i\\
&=G_1\{m^jh^i,km^jh^i,kh^{-2i}m^jh^i,kkh^{-2i}m^jh^i\}\\
&=G_1\{m^jh^i,m^jh^{-i}k,m^jh^ik,m^jh^{-i}\}
\end{align*}

Since $S$ is preserved under $G_1$, we have $m^jh^{-i}\in S$ and $s^\alpha=m^{-j}h^i=(m^jh^{-i})^{-1}\in S^{-1}=S$. This completes the proof that $S^\alpha=S$ hence $\alpha\in\Aut(R)_S>1$.
\end{proof}

We can now completely determine the DRR and GRR-detecting status of these final two families of groups we have been studying.

\begin{corollary}\label{cor:special}
Let $q$ and $r$ be distinct primes.
\begin{enumerate}
\item If $R\cong \C_7\rtimes \C_3$, then $R$ is not DRR-detecting but it is GRR-detecting. \label{case1Final}
\item If $R\cong \C_q\rtimes \C_r$, with $(q,r)=(31,5)$ or $(q,r)$ a safe/Sophie Germain prime pair, with $q\equiv 3\pmod 4$ and $q\geq 11$,  then $R$ is not GRR-detecting (so is not DRR-detecting). \label{case2Final}
\item If $R$ is nonabelian and isomorphic to $\C_q\rtimes \C_r$ but not in the above two cases, then $R$ is DRR-detecting (and is therefore also GRR-detecting). \label{case3Final}
\item If $q$ is odd,  $r\in \{3,5\}$ and $R\cong \C_q\times \D_{2r}$, then $R$ is GRR-detecting. \label{case4Final}
\end{enumerate}
\end{corollary}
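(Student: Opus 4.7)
The plan is to use Theorem~\ref{theo:SpecialCases} as the main engine. For each part, the setup is the same: take $S \subseteq R$ (inverse-closed in the GRR parts) with $\Aut(R)_S=1$, and suppose $\Cay(R,S)$ is not a DRR (or not a GRR). Then $\hat R$ is properly contained in $\Aut(\Cay(R,S))$, so one can choose $\hat R < G \le \Aut(\Cay(R,S))$ with $\hat R$ maximal in $G$. Since $\Aut(R)_S=1$, Theorem~\ref{theo:SpecialCases} forces its conclusion~(2) or~(3), i.e.\ the essentially almost simple case. The detecting parts (\ref{case3Final}) and (\ref{case4Final}) then reduce to ruling out all such almost simple configurations, while the non-detecting parts (\ref{case1Final}) and (\ref{case2Final}) require producing explicit witnesses.

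For the detecting direction, I would classify the almost simple groups $T \trianglelefteq G \le \Aut(T)$ containing a regular subgroup isomorphic to $R$ (respectively to $R/\C_q \cong \D_{2r}$ in the $\C_q \times \D_{2r}$ case). Since $|R|$ is squarefree with at most three prime factors, the classification of factorisations of almost simple groups, together with the classification of primitive groups of prime or small composite degree, leaves a short list of candidates: essentially $\PSL(2,q)$ in its natural action, a few alternating and symmetric groups in their actions on subsets (for example $\Alt(7)$ on the $21$ unordered pairs gives $R = \C_7 \rtimes \C_3$), and a small number of sporadic exceptions. For each candidate $(G,R)$ I would compute the orbits of $G_1$ on $R \setminus \{1\}$ and check which unions $S$ of such orbits can have $\Aut(R)_S=1$. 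The key observation should be that in every allowed family some non-trivial element of $\Aut(R)$ -- typically conjugation by an element of the non-normal cyclic complement in the Frobenius case, or the dihedral inversion in part (\ref{case4Final}) -- preserves every $G_1$-orbit, forcing $\Aut(R)_S > 1$ and contradicting our assumption.

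For part (\ref{case1Final}), I would exhibit the witness DRR on $R = \C_7 \rtimes \C_3$ coming from the regular action of $F_{21}$ on the $21$ unordered pairs from a $7$-element set, with $S$ corresponding to one of the non-self-paired $\Alt(7)$-orbitals; a direct check gives $\Aut(R)_S=1$, while $\Alt(7) \le \Aut(\Cay(R,S))$ prevents it from being a DRR. For part (\ref{case2Final}), the witnesses are generalised wreath products fed into Corollary~\ref{DRRProp1} with $K = H$ the characteristic cyclic subgroup of order $q$: choose an inverse-closed $S$ whose intersection with $K$ is not fixed by $\C_r$ and whose remainder is a union of $K$-cosets outside $K$. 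The Sophie Germain / safe-prime hypothesis with $q \equiv 3 \pmod 4$ (or the special pair $(31,5)$) controls the orbit structure of $\Aut(R)$ on $R \setminus \{1\}$ tightly enough that a short number-theoretic computation produces an $S$ with $\Aut(R)_S=1$.

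The principal obstacle is the case-by-case analysis in the almost simple setting in parts (\ref{case3Final}) and (\ref{case4Final}): each candidate $(G,R)$ pair requires either an argument ruling out the regular embedding of $R$ or a concrete suborbit computation that identifies the non-trivial element of $\Aut(R)$ stabilising any potential $S$. A secondary obstacle, specific to part (\ref{case4Final}), is handling conclusion~(3) of Theorem~\ref{theo:SpecialCases}, where $G/\C_q$ is almost simple with $\D_{2r}$ ($r \in \{3,5\}$) as regular subgroup; this essentially reduces to the GRR-detecting status of $\D_6$ and $\D_{10}$ already recorded in Proposition~\ref{prop:sporadic}, lifted across the direct factor $\C_q$.
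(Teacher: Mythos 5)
Your high-level skeleton (reduce via Theorem~\ref{theo:SpecialCases} to the almost simple case and consult the classification of exact factorisations of almost simple groups) is the same as the paper's, but two of your explicit constructions fail. The fatal one is part~(\ref{case2Final}): you propose witnesses that are generalised wreath products with $K=H=\C_q$ fed into Corollary~\ref{DRRProp1}. No such witness can exist. For a nonabelian group of order $qr$, conjugation by any nontrivial $k\in K$ fixes $K$ pointwise and maps each coset $Ks$ to $k^{-1}sk\in KsK=Ks$, so if $S\setminus K$ is a union of $K$-cosets then $\Aut(R)_S$ contains a subgroup of order $q$ of inner automorphisms (this is exactly Lemma~\ref{UsingConjugation}/Corollary~\ref{pq-wreath}); the hypothesis $\Aut(R)_S=1$ of Corollary~\ref{DRRProp1} is therefore never satisfied, no matter how you choose $S\cap K$. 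No amount of number theory about safe/Sophie Germain pairs rescues this. The actual witnesses are not wreath products at all: they are the vertex-primitive Praeger--Xu graphs whose full automorphism groups are $\PGaL(5,2)$ for $(q,r)=(31,5)$ and $\PSL(2,q)$ for $q\equiv 3\pmod 4$, $q\geq 11$, in which $\C_q\rtimes\C_{(q-1)/2}$ is maximal and self-normalising.

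Your witness for part~(\ref{case1Final}) also does not exist as described: $\Alt(7)$ on the $21$ unordered pairs has rank $3$ and both nontrivial orbitals (``meet in one point'' and ``disjoint'') are symmetric relations, hence self-paired, so there is no non-self-paired orbital to take as $S$; and for either self-paired orbital the graph is the triangular graph $T(7)$ or its complement, whose automorphism group $\Sym(7)$ contains $\C_7\rtimes\C_6=\N_{\Sym(7)}(\hat R)>\hat R$, so $\Aut(R)_S\neq 1$ by Theorem~\ref{aut-R-S}. (The paper simply verifies~(\ref{case1Final}) by computer.) Finally, two weaker points on the detecting parts: your ``key observation'' that some nontrivial automorphism preserves every $G_1$-orbit cannot hold for \emph{every} surviving almost simple configuration, since the part~(\ref{case2Final}) groups are genuine counterexamples, and the paper instead eliminates the one extra surviving entry ($\PSL(3,3)$ with $\C_{13}\rtimes\C_3$) by showing the $39$-point action is imprimitive with orbital digraphs $13\K_3$ and its complement, forcing a wreath product and hence $\Aut(R)_S>1$ via Corollary~\ref{pq-wreath}. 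Likewise, conclusion~(3) of Theorem~\ref{theo:SpecialCases} in part~(\ref{case4Final}) is not handled by ``lifting'' the GRR-detection of $\D_6$ and $\D_{10}$ across $\C_q$ (no such reduction is justified); the paper disposes of it by noting that the only candidate from the Liebeck--Praeger--Saxl tables is $\PSL(2,5)$ with point-stabiliser $\D_{10}$, and $\PSL(2,5)$ admits no exact factorisation $\overline{G_1}\D_{10}$ with $|\overline{G_1}|=6$.
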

\begin{proof}
The statement in  (\ref{case1Final}) can be checked by computer. 

In \cite[Lemma 3.3]{PraegerXu}, it is shown that there are Cayley graphs on $\C_{31}\rtimes \C_5$ with automorphism  group $\PGaL(5,2)$. Note that $\C_{31}\rtimes \C_5$ is self-normalising (even maximal) in $\PGaL(5,2)$. 

In \cite[Lemma 4.4]{PraegerXu}, it is shown that if $q\geq 11$ is a prime (the hypothesis that $q\geq 11$ is in the paragraph before the statement of the lemma) with $q\equiv 3\pmod 4$, then there are Cayley graphs on $\C_q\rtimes \C_{(q-1)/2}$ with automorphism  group $\PSL(2,q)$. Note that $\C_q\rtimes \C_{(q-1)/2}$ is self-normalising (even maximal) in $\PSL(2,q)$. Together with the previous paragraph, this gives (\ref{case2Final}).

It remains to show (\ref{case3Final}) and (\ref{case4Final}). Let $R$ be one of the groups appearing in (\ref{case3Final}) or (\ref{case4Final}). As in Theorem~\ref{theo:SpecialCases}, let $S\subseteq R$, suppose that $S=S^{-1}$ when $R\cong \C_q\times \D_{2r}$ and let $G$ satisfy $\hat{R}<G\leq\Aut(\Cay(R,S))$, with $\hat{R}$ maximal in $G$. By Theorem~\ref{theo:SpecialCases}, we can assume the following:
\begin{itemize}
\item $\hat{R}$ is core-free in $G$ and $G$ is almost simple, or 
\item $R\cong \C_q\times \D_{2r}$, $\C_q$ is the core of $\hat{R}$ in $G$, and $G/\C_q$ is almost simple.
\end{itemize}
As in the proof of Theorem~\ref{theo:SpecialCases}, we identify $\hat{R}$ with $R$. Let $N$ be the core of $R$ in $G$, let $\overline{G}=G/N$, $\overline{R}=R/N$ and $\overline{G_1}=G_1N/N$. Note that $\overline{G}$ is an almost simple group with a maximal core-free subgroup $\overline{R}$ and another subgroup $\overline{G_1}$ such that $\overline{G}=\overline{R}\overline{G_1}$ and $\overline{R}\cap \overline{G_1}=1$. We can then view $\overline{G}$ as a primitive group of almost simple type with point-stabiliser $\overline{R}$ having a regular subgroup $\overline{G_1}$. Such groups were classified by Liebeck, Praeger, Saxl in \cite[Theorem 1.1 and Tables 16.1-16.3]{LPS}. 

When consulting these tables, it is important to remember that our point of view (for the moment) is in some sense ``dual'' to theirs:  $\overline{R}$ is our point-stabiliser so it corresponds to their $G_\alpha$. The next thing to note is that they do not list all the almost simple groups, but rather just their socles (which they denote $L$ and we will denote $\overline{L}$), and do not give $G_\alpha$, but rather $G_\alpha\cap L$. Now, $\overline{R}$ has the property that its order is squarefree, a product of at most three primes. This property is clearly preserved under subgroups, hence if $G_\alpha$ has this property, so does $G_\alpha\cap L$. So we can go through their tables and list all such instances. This is the result:

\begin{center}
\begin{tabular}{|c|c|c|c|}
\hline
$\overline{L}$ & $\overline{R}\cap \overline{L}$& Remark\\ \hline
 $\Alt(p)$& $\C_{p}\rtimes \C_{(p-1)/2}$   & $p\equiv 3\pmod 4$, $p\neq 7, 11, 23$\\  
 $\PSL(2,p)$ & $\C_{p}\rtimes \C_{(p-1)/2}$  &** \\ 
 $\PSL(2,11)$ & $\C_{11}\rtimes \C_{5}$  &\\  
  $\PSL(2,23)$ & $\C_{23}\rtimes \C_{11}$  &\\ 
 $\PSL(2,59)$& $\C_{59}\rtimes \C_{29}$   &\\ 
  $\PSL(3,3)$& $\C_{13}\rtimes \C_{3}$    &\\  
 $\PSL(3,4)$ & $\C_{7}\rtimes \C_{3}$  & *,$\overline{G}\geq \overline{L}.\Sym(3)$\\   
 $\PSL(5,2)$ & $\C_{31}\rtimes \C_{5}$&\\ 
  $\PSU(3,8)$& $\C_{19}\rtimes \C_{3}$  & *,$\overline{G}\geq \overline{L}.3^2$\\ 
  $\M_{23}$& $\C_{23}\rtimes \C_{11}$ &\\   
 $\M_{23}$& $\C_{23}\rtimes \C_{11}$  &\\ \hline
\end{tabular}
\end{center}

** In the corresponding line of \cite[Table 16.1]{LPS}, there is a remark that this case does not always occur.

Assume first that $R\cong \C_q\times \D_{2r}$, with $r\in\{3,5\}$. By Theorem~\ref{theo:SpecialCases}, $\overline{R}$ is one of $\C_q\times \D_{2r}$ or $\D_{2r}$. From the table above, we see that  $\overline{R}\cap \overline{L}$ is centreless, so either way we must have $\overline{R}\cap \overline{L}=\D_{2r}$. Again from the table above, the only case where this could occur is in the second line with $p=5$, but then we must have $\overline{G}=\overline{L}\cong \PSL(2,5)$, and one can check that there is no subgroup $\overline{G_1}$ of order $6$ in $\PSL(2,5)$ such that $\PSL(2,5)=\overline{G_1}\D_{10}$.

From now on, we assume that $R=\C_q\rtimes \C_r$ and $R$ is core-free, so $G=\overline{G}$, $R=\overline{R}$ and $L=\overline{L}$. Since $|R|$ has two prime divisors and, in the table, $R\cap L$ has at least two prime divisors, we must have $R=R\cap L$ and it follows (given the ``dual" point of view of~\cite{LPS}) that $G=L$, so $G$ is simple. This allows us to eliminate the cases which have a remark indicating that $\overline{G}>\overline{L}$, noted * in the table. (That is, we can eliminate the cases where $\overline{L}=\PSL(3,4)$ and  $\overline{L}=\PSU(3,8)$.)

Finally, we note that all remaining cases correspond to (\ref{case2Final}) of our statement (that is, $(q,r)=(31,5)$ or $(q,r)$ is a safe/Sophie Germain prime pair, with $q\equiv 3\pmod 4$ and $q\geq 11$), except the case $\overline{L}=\PSL(3,3)$, which we deal with now. According to the table, we are considering $\PSL(3,3)$ as a transitive permutation group on $13\cdot 3$ points. There are two conjugacy classes of subgroups of index $13\cdot 3$ in $\PSL(3,3)$, but they are fused in $\Aut(\PSL(3,3))$. The corresponding transitive permutation group is not primitive: it admits blocks of size $3$. This group has rank $3$ and its only non-trivial orbital digraphs are $13\K_3$ and its complement (the complete multipartite graph with $13$ parts of size $3$). It follows that $\Cay(R,S)$ is a nontrivial wreath product with respect to $\C_3$ and $\Aut(R)_S>1$ by Corollary~\ref{pq-wreath}. This concludes the  proof.
\end{proof}

Combining Theorems~\ref{main-GRR} and \ref{theo:SpecialCases}, Propositions~\ref{prop:sporadic}, \ref{prop:primeorder} and \ref{prop:abelian}, and Corollaries~\ref{cor:exceptional} and \ref{cor:special} yields Theorem~\ref{theo:main}.

\end{document}